\documentclass{amsart}
\usepackage[utf8]{inputenc}

\usepackage{amsmath}
\usepackage{amsfonts}
\usepackage{amssymb}
\usepackage{amsthm}

\usepackage{hyperref}

\usepackage{color}
\usepackage[all]{xy}
\usepackage{enumerate}
\usepackage{mathrsfs}

\usepackage{mycommands}

\newcommand{\B}{\mathcal{B}}

\theoremstyle{plain}
\newtheorem{theorem}{Theorem}[section]
\newtheorem{lemma}[theorem]{Lemma}
\newtheorem*{lemma*}{Lemma}
\newtheorem{proposition}[theorem]{Proposition}
\newtheorem{corollary}[theorem]{Corollary}

\theoremstyle{definition}
\newtheorem{definition}[theorem]{Definition}

\theoremstyle{remark}
\newtheorem{example}[theorem]{Example}
\newtheorem*{example*}{Example}
\newtheorem{remark}[theorem]{Remark}
\newtheorem*{remark*}{Remark}

\title[\texorpdfstring{$p$}{p}-bases and differential operators on varieties...]{\texorpdfstring{$p$}{p}-bases and differential operators on varieties defined over a non-perfect field}

\author{Carlos Abad}

\address{Dpto. Matem\'aticas,  Universidad Aut\'onoma de Madrid and Instituto de Ciencias Ma\-te\-m\'a\-ti\-cas CSIC-UAM-UC3M-UCM, Ciudad Universitaria de Cantoblanco, 28049 Madrid, Spain}

\email{carlos.abad@uam.es}

\thanks{The author wishes to thank his advisor, Orlando Villamayor, for all his suggestions and ongoing support, and Diego Sulca for his comments on section~\ref{sec:regular-varieties}.
The author was partially  supported from the Spanish Ministry of Economy and Competitiveness, through the ``Severo Ochoa'' Programme for Centres of Excellence in R\&D (SEV-2015-0554), and through MTM2015-68524-P (MINECO/FEDER)}

\keywords{$p$-basis, K\"ahler differentials, differential operator, non-perfect field, regular variety, singularities.}
\subjclass[2010]{Primary: 13N35. Secondary: 13N05, 14B05.}

\begin{document}

\begin{abstract}
Let $k$ be a possibly non-perfect field of characteristic $p > 0$. In this work we prove the local existence of absolute $p$-bases for regular algebras of finite type over $k$. Namely, consider a regular variety $Z$ over $k$. Kimura and Niitsuma proved that, for every $\xi \in Z$, the local ring $\OO_{Z,\xi}$ has a $p$-basis over $\OO_{Z,\xi}^p$. Here we show that, for every $\xi \in Z$, there exists an open affine neighborhood of $\xi$, say $\xi \in \Spec(A) \subset Z$, so that $A$ admits a $p$-basis over $A^p$.

This passage from the local ring to an affine neighborhood of $\xi$ has geometrical consequences, some of which will be discussed in the second part of the article. As we will see, given a $p$-basis $\B$ of the algebra $A$ over $A^p$, there is a family of differential operators on $A$ naturally associated to $\B$. These differential operators will enable us to give a Jacobian criterion for regularity for varieties defined over $k$, as well as a method to compute the order of an ideal $I \subset A$.

\end{abstract}

\maketitle

\setcounter{tocdepth}{1}  %
\tableofcontents

\section{Introduction}

Differential methods have been widely used in the study of singularities of algebraic varieties.
While some of these methods are well suited for varieties defined over a perfect field $k$, they tend to fail when $k$ is non-perfect. The aim of this article is to provide some tools and methods based on the use of differential operators which enable us to study the singularities of an embedded variety defined over a non-perfect field.

The difficulties of the non-perfect case already arise when considering a hypersurface embedded in the affine space. Let $f \in k[x_1, \dotsc, x_n]$ be a polynomial defined over a field $k$, and let
\[
	H = \Spec(k[x_1, \dotsc, x_n] / \langle f \rangle)
	\subset \mathbb{A}^n_k
\]%
be the hypersurface defined by $f$ in $\mathbb{A}^n_k$. When the field $k$ is perfect, the singular locus of $H$ is given by
\[
	\Sing(H)
	= \VV \left( \left\langle f,
		\frac{\partial f}{\partial x_1},
		\dotsc, \frac{\partial f}{\partial x_n} \right\rangle \right)
	\subset \mathbb{A}^n_k .
\]%
However, this characterization of the singular locus of $H$ fails in general when $k$ is non-perfect.
For instance, assume that $k = \mathbb{F}_p(v)$, where $\mathbb{F}_p$ represents the prime field of characteristic $p > 0$ and $v$ is a transcendental element over $\mathbb{F}_p$, and consider the polynomial $f = x^p + v y^p \in k[x,y]$. Let $H = \Spec(k[x,y] / \langle f \rangle)$ be the hypersurface defined by $f$ in $\mathbb{A}^2_k$. It can be proved that $H$ is regular everywhere except at the point of the origin. However, $\frac{\partial f}{\partial x} = \frac{\partial f}{\partial y} = 0$, and thus the partial derivatives of $f$ do not help to compute the singular locus of $H$.
As we will see in section~\ref{sec:jacobian}, one way to overcome this difficulty is to consider the partial derivative of $f$ with respect to the transcendental element $v$, say $\frac{\partial f}{\partial v} = y^p$ (see Example~\ref{ex:jacobian-criterion}).

In order to extend some of the existing differential methods to the case of varieties defined over a non-perfect field, we will work with what we call absolute $p$-bases. Let $A$ be a ring of characteristic $p > 0$. An \emph{absolute $p$-basis} of $A$ will be a $p$-basis of $A$ over the subring $A^p \subset A$. That is, an absolute $p$-basis of $A$ will be a subset $\B \subset A$ which has the following properties:
\begin{enumerate}[i)]
\item The elements of $\B$ are \emph{$p$-independent}, i.e., the monomials of the form $b_1^{\alpha_1} \cdot \dotsc \cdot b_r^{\alpha_r}$ with $b_1, \dotsc, b_r \in \B$ and $0 \leq \alpha_i < p$ are linearly independent over $A^p$;
\item The ring $A$, regarded as an $A^p$-algebra, is generated by $\B$. That is, it is required that $A = A^p[\B]$.
\end{enumerate}%
It is known that every field of characteristic $p > 0$ admits an absolute $p$-basis. However, in general, given a ring $C$ of characteristic $p > 0$, there is no an absolute $p$-basis of $C$. Kimura and Niitsuma prove in \cite{KimuraNiitsuma84} that, for a regular variety $Z$ defined over a field $k$ of characteristic $p > 0$, the local ring $\OO_{Z,\xi}$ admits an absolute $p$-basis for every $\xi \in Z$. In this work we prove the local existence of $p$-basis for finitely generated algebras over a non-necessarily perfect field $k$. Namely, given regular variety $Z$ over $k$, we prove that, for every $\xi \in Z$, there exists an open affine neighborhood of $Z$ at $\xi$, say $\Spec(A) \subset Z$, so that $A$ admits an absolute $p$-basis (see Theorem~\ref{thm:variety-admits-p-basis}).

As we will see in section~\ref{sec:differential-bases}, the notion of $p$-basis is closely related to that of differential basis. Given an algebra $A$, we will say that a set $\B \subset A$ is a \emph{differential basis of $A$} if the module of K\"ahler differentials of $A$, say $\Kahler_A$, is a free $A$-module with the set $\{db \in \Kahler_A \mid b \in \B\}$ as a basis. It is not hard to see that an absolute $p$-basis of $A$ is a differential basis of $A$ (the converse fails in general). Thus, given an absolute $p$-basis $A$, say $\B$, one can define a family of derivatives on $A$ which act as partial derivatives with respect to the elements of $\B$. Namely, for each $b \in \B$, one can define a derivative $\frac{\partial}{\partial b} : A \to A$ so that $\frac{\partial}{\partial b}(b') = \delta_{bb'}$ (Kronecker delta) for every $b' \in \B$.
Similarly, in section~\ref{sec:differential-operators} we will show that, when $A$ is a regular domain, there is a family of differential operators of higher order naturally associated to the $p$-basis $\B$ which behave as the differential operators in a polynomial ring (see Proposition~\ref{prop:p-basis:diffs} and Corollary~\ref{crl:p-basis:diff-smooth}). These derivatives and differential operators will be used along sections~\ref{sec:jacobian} and~\ref{sec:differential-saturation-and-order} to give two applications of $p$-bases to the study of singularities on varieties defined over a non-perfect field. This applications are explained in the following lines.

\subsection*{Jacobian criterion for regularity}

Let $k$ be a field of characteristic $p > 0$, and consider the polynomial ring $A = k[x_1, \dotsc, x_n]$. Let $J = \langle f_1, \dotsc, f_s \rangle \subset A$ be an ideal, and set $C = A / J$. Fix a prime $\p \subset C$, and let $P$ denote the preimage of $\p$ in $A$. Assume that $\height(JA_P) = r$, in such a way that $\dim(C_\p) = n-r$. The usual formulation of the Jacobian criterion says that $C_\p$ is smooth over $k$ if and only if the Jacobian matrix
\[
	\begin{pmatrix}
		\frac{\partial f_1}{\partial x_1} & \hdots &
			\frac{\partial f_1}{\partial x_n} \\
		\vdots & \ddots & \vdots\\
		\frac{\partial f_s}{\partial x_1} & \hdots &
			\frac{\partial f_s}{\partial x_n}
	\end{pmatrix}
\]%
has rank $r$ modulo $P$. When $k$ is a perfect field, this is equivalent to say that $C_\p$ is regular. However, the notions of regularity and smoothness differ when $k$ is non-perfect. Namely, when $k$ is non-perfect, it may happen that $C_\p$ is simultaneously smooth over $k$ and non-regular (see \cite{Zariski47}). 
Note that, as opposed to smoothness, which is a relative notion with respect to the field $k$, regularity is an intrinsic property of the local ring $C_\p$.

In section~\ref{sec:jacobian} we formulate a Jacobian criterion for regularity which works over non-perfect fields. Let $k$ be a non-perfect field of characteristic $p > 0$, and let $A = k[x_1, \dotsc, x_n]$, $J = \langle f_1, \dotsc, f_s \rangle$, and $C = A / J$ be as in the previous paragraph. Fix an absolute $p$-basis of $k$, say $\B_0$. Then it is not hard to check that $\B = \B_0 \cup \{x_1, \dotsc, x_n\}$ is an absolute $p$-basis of $A$. Thus, for each $b \in \B_0$, one can define a partial derivative on $A$ associated to $b$ which we denote by $\frac{\partial}{\partial b} : A \to A$. As $\B$ can also be regarded as a differential basis of $A$, one has that $\frac{\partial f_1}{\partial b} = \dotsb = \frac{\partial f_s}{\partial b} = 0$ for almost every element $b \in \B_0$, except for a finite number of them, say $b_1, \dotsc, b_m \in \B_0$. Fix a prime $\p \subset C$, let $P$ denote its preimage in $A$, and set $r = \height(JA_P)$. Under these hypotheses, Zariski's Jacobian criterion \cite[Theorem~11]{Zariski47} says that $C_\p$ is regular if and only if the extended Jacobian matrix
\[
	\begin{pmatrix}
		\frac{\partial f_1}{\partial b_1} & \hdots &
			\frac{\partial f_1}{\partial b_m} &
			\frac{\partial f_1}{\partial x_1} & \hdots &
			\frac{\partial f_1}{\partial x_n} \\
		\vdots & \ddots & \vdots & \vdots & \ddots & \vdots \\
		\frac{\partial f_s}{\partial b_1} & \hdots &
			\frac{\partial f_s}{\partial b_m} &
			\frac{\partial f_s}{\partial x_1} & \hdots &
			\frac{\partial f_s}{\partial x_n}
	\end{pmatrix}
\]%
has rank $r$ modulo $P$.
In Proposition~\ref{prop:jacobian-criterion-p-bases} we will give a more general formulation of Zariski's criterion in which $A$ is not assumed to be a polynomial ring, but simply a regular ring that admits an absolute $p$-basis.

\subsection*{Differential operators and order of ideals}

Consider a regular variety $Z$ over a field $k$. Given a coherent ideal $\II \subset \OO_Z$, the order of $\II$ at a point $\xi \in Z$ is defined by
\[
	\ord_\xi(\II)
	= \max \left \{ n \in \mathbb{N}
		\mid \II_\xi \subset \M_\xi^n \right \},
\]%
where $\M_\xi$ denotes the maximal ideal of $\OO_{Z,\xi}$. For instance, when $Z$ is affine, say $Z = \Spec(A)$, and $\II$ is a principal ideal generated by an element $f \in A$, then $\ord_\xi(\II)$ coincides with the maximum integer $n$ such that $f \in \M_\xi^n$. Fix an integer $N \geq 1$. From the point of view of resolution of singularities, it is interesting to study the set of points of $Z$ where a given ideal $\II \subset \OO_Z$ has at least order $N$, as this set coincides with the singular locus of the idealistic exponent $(\II,N)$ as defined by Hironaka~\cite{Hironaka77}. The purpose of section~\ref{sec:differential-saturation-and-order} is to find a description of this set of points or, more precisely, to find coherent ideal $\mathcal{J}_N \subset \OO_Z$ such that
\begin{equation} \label{eq:intro:order-as-zero-set}
	\{ \xi \in Z \mid \ord_\xi(\II) \geq N \}
	= \VV (\mathcal{J}_N)
	\subset Z .
\end{equation}%
When $Z$ is a regular variety defined over a perfect field $k$, such ideal can be constructed by applying differential operators of order at most $N-1$ over $k$ to the elements of $\II$ (see \cite[\S3]{Hironaka05} and \cite[Ch.~III, Lemma~1.2.7]{GiraudEtudeLocale}). However, this approach fails in general when $k$ is non-perfect for the following reasons. On the one hand, it is not enough to consider $k$-linear differential operators on $Z$. Instead, we shall work with absolute differential operators. That is, with differential operators on $Z$ which are relative to the prime field of $k$, say $\mathbb{F}_p$, rather than those which are relative to $k$. On the other hand, the sheaf of absolute differential operators of order at most $N-1$ may not be finitely generated or quasi-coherent over $Z$. Still, given a regular variety $Z$ over a non-perfect field $k$, a coherent ideal $\II \subset \OO_Z$, and an integer $N \geq 1$, we will show that it is possible to construct a coherent ideal $\mathcal{J}_N \subset \OO_Z$ as that in \eqref{eq:intro:order-as-zero-set} by applying absolute differential operators to the elements of $\II$ (see Proposition~\ref{prop:coherent-diff-I} and Theorem~\ref{thm:order-ideal-sheaf}).

\section{Definition and first properties of \texorpdfstring{$p$}{p}-bases}

\label{sec:p-bases}

In this section we review the notion of $p$-basis, as well as some examples and some of the main properties of $p$-bases.

\begin{definition}
Let $k$ be a ring containing $\mathbb{F}_p$, let $A$ be an arbitrary $k$-algebra, and consider a (possibly infinite) subset $\B \subset A$. A \emph{$p$-monomial in $\B$} is a monomial of the form $b_1^{\alpha_1} \cdot\dotsc\cdot b_r^{\alpha_r}$, with $b_1, \dotsc, b_r \in \B$, $r \in \mathbb{N}$, and $0 \leq \alpha_i < p$. The subset $\B \subset A$ is said to be \emph{$p$-independent over $k$} if the set of $p$-monomials in $\B$, say
\[
	\left \{ b_1^{\alpha_1} \cdot\dotsc\cdot b_r^{\alpha_r}
	\mid b_i \in \B, \; 0 \leq \alpha_i < p \right \},
\]%
is linearly independent over the subring $A^p[k]$. The subset $\B$ is said to be a \emph{$p$-basis of $A$ over $k$} if it is $p$-independent over $k$ and
\[
	A = A^p[k][\B].
\]%
A $p$-basis of $A$ over the prime field $\mathbb{F}_p$ (or equivalently over $A^p$) is also called an \emph{absolute $p$-basis of $A$}.
\end{definition}

Observe that, if $\B$ is a $p$-basis of $A$ over $k$, then $A$ can be regarded as a free $A^p[k]$-module with the set of $p$-monomials in $\B$ as a basis, and vice-versa. Note also that, given a set of variables, say $\{ t_b\}_{b \in \B}$, there is a natural morphism of $A^p[k]$-algebras from $A^p[k][t_b \mid b \in \B]$ to $A$ mapping each variable $t_b$ to the corresponding element $b$. Thus one can characterize a $p$-basis of $A$ over $k$ as follows.

\begin{lemma}  \label{lm:p-basis-presentation}
A subset $\B \subset A$ is a $p$-basis of $A$ over $k$ if and only if
\[
	A \simeq A^p[k][t_b \mid b \in \B]
		/ \langle t_b^p - b^p \mid b \in \B \rangle ,
\]%
where $\{ t_b\}_{b \in \B}$ represents a set of variables.
\end{lemma}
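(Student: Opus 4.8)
Set $R = A^p[k]$, so that $R$ is a subring of $A$ and $A$ is an $R$-algebra. Write $S = R[t_b \mid b \in \B]$ for the polynomial ring on the indicated variables, and let $\phi \colon S \to A$ be the natural $R$-algebra homomorphism determined by $t_b \mapsto b$. For every $b \in \B$ one has $b^p \in A^p \subseteq R$, and $\phi(t_b^p - b^p) = b^p - b^p = 0$; hence the ideal $I = \langle t_b^p - b^p \mid b \in \B \rangle$ is contained in $\ker \phi$, and $\phi$ factors through an $R$-algebra homomorphism $\bar\phi \colon S/I \to A$. This $\bar\phi$ is the canonical map realizing the isomorphism in the statement, and the plan is to prove that $\B$ is a $p$-basis of $A$ over $k$ if and only if $\bar\phi$ is an isomorphism.

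The crux is to describe $S/I$ as an $R$-module. Writing $c_b = b^p \in R$, so that $I = \langle t_b^p - c_b \mid b \in \B \rangle$, I claim that $S/I$ is a \emph{free} $R$-module whose basis is the set of $p$-monomials in the variables, namely $\{ \prod_{b} t_b^{\alpha_b} \mid 0 \leq \alpha_b < p,\ \text{almost all } \alpha_b = 0 \}$. For a single variable this is the standard fact that $R[t]/\langle t^p - c \rangle$ is free over $R$ with basis $1, t, \dotsc, t^{p-1}$. For a finite subset one iterates, using the identification $R[t_1, \dotsc, t_n]/\langle t_i^p - c_i \rangle \simeq \bigotimes_{i=1}^{n} \bigl( R[t_i]/\langle t_i^p - c_i \rangle \bigr)$ as a tensor product over $R$ of free $R$-modules, which is therefore free with the product basis. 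For infinite $\B$, one writes $S/I$ as the filtered colimit over the finite subsets $\B' \subseteq \B$ of the corresponding quotients; since quotients commute with filtered colimits and the transition maps carry basis elements to basis elements, the colimit of these compatible bases yields the asserted $R$-basis of $S/I$.

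Once the freeness is established, the conclusion is purely formal. The map $\bar\phi$ is $R$-linear and sends the basis element $\prod_{b} t_b^{\alpha_b}$ of $S/I$ to the $p$-monomial $\prod_{b} b^{\alpha_b}$ in $\B$. Since a homomorphism out of a free module is injective exactly when the images of a basis are linearly independent, $\bar\phi$ is injective if and only if the $p$-monomials in $\B$ are linearly independent over $R = A^p[k]$, which is precisely the condition that $\B$ be $p$-independent over $k$. Likewise $\bar\phi$ is surjective if and only if its image $R[\B] = A^p[k][\B]$ equals $A$, which is the generation condition. Hence $\bar\phi$ is an isomorphism exactly when both defining conditions of a $p$-basis hold, proving the equivalence; for the direction that assumes only an abstract $R$-algebra isomorphism $A \simeq S/I$, one reads the lemma with $\bar\phi$ as the intended isomorphism.

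The main obstacle is the freeness claim of the second paragraph, and in particular its justification for infinite $\B$ via the filtered-colimit argument; every other step is a formal consequence of the elementary linear algebra of free modules.
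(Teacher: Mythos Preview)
Your proof is correct and follows essentially the same approach as the paper's: both set up the natural map $\bar\phi$ from the quotient to $A$, establish that $S/I$ is free over $R=A^p[k]$ with the $p$-monomials as basis, and read off the equivalence from the linear-algebra characterization of when $\bar\phi$ is bijective. The only difference is one of detail: the paper simply asserts the freeness of $S/I$ with the indicated basis, whereas you supply an explicit justification (single variable, finite tensor product, filtered colimit for infinite $\B$); this extra care is harmless and fills in what the paper leaves to the reader.
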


\begin{proof}
Given an arbitrary set $\B \subset A$, the ideal $\langle t_b^p - b^p \mid b \in \B \rangle$ is clearly contained in kernel of the natural map from $A^p[k][t_b \mid b \in \B]$ to $A$ which maps $t_b$ to $b$. Since
\[
	A^p[k][t_b \mid b \in \B] / \langle t_b^p - b^p \mid b \in \B \rangle
\]%
is a free $A^p[k]$-module with the set of $p$-monomials in $\{ t_b\}_{b \in \B}$, say
\[
	\{t_{b_1}^{\alpha_1} \cdot\dotsc\cdot t_{b_r}^{\alpha_r}
	\mid b_i \in \B, \; 0 \leq \alpha_i < p \},
\]%
as a basis, one readily checks that $\B$ is a $p$-basis of $A$ over $k$ if and only if the natural map
\[ \xymatrix {
	A^p[k][t_b \mid b \in \B] / \langle t_b^p - b^p \mid b \in \B \rangle
	\ar[r] &
	A
} \]%
is an isomorphism.
\end{proof}

\subsection*{On the existence of $p$-bases}
In general, given a ring $k$ and an arbitrary $k$-algebra $R$, the algebra $R$ does not admit a $p$-basis over $k$. For instance, if $k$ is a field and $[k : k^p] = \infty$, then the power series ring $k[[x]]$ does not admit a $p$-basis over $k$ or over $\mathbb{F}_p$ (see \cite[Example~3.4]{KimuraNiitsuma80}). However, there are some cases in which it is known that a ring $R$ does admit a $p$-basis over $k$.

Consider a field extension $K / k$. In this case, one can check that a maximal $p$-independent set over $k$, say $\B \subset K$, is a $p$-basis of $K$ over $k$, and conversely (see \cite[\S26, p.~202]{MatsumuraRingTheory}). %
Thus, by Zorn's lemma, $K$ admits a $p$-basis over $k$. Unfortunately, as the following example shows, this argument does not work in a wider setting.

\begin{example}
Consider the polynomial ring $\mathbb{F}_2[x,y]$, where $x, y$ represent variables.  It can be checked that $\{ x, xy \}$ is a maximal $p$-independent set over $\mathbb{F}_2$, but it is not a $p$-basis of $\mathbb{F}_2[x,y]$ over $\mathbb{F}_2$.
\end{example}

For a polynomial ring $k[x_1, \dotsc, x_n]$, the set of variables $\{x_1, \dotsc, x_n\}$ is always a $p$-basis of $k[x_1, \dotsc, x_n]$ over $k$. This property can be generalized as follows.

\begin{lemma}  \label{lm:p-basis:polynomial-extension}
Let $k$ be a ring over $\mathbb{F}_p$, and let $A_0$ be an arbitrary $k$-algebra which admits a $p$-basis over $k$, say $\B_0$. Consider the polynomial ring $A = A_0[x_1, \dotsc, x_n]$. Then the set $\B = \B_0 \cup \{x_1, \dotsc, x_n\}$ is a $p$-basis of $A$ over $k$.
\end{lemma}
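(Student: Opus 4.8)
The plan is to check directly the two conditions in the definition of a $p$-basis, relying on the hypothesis that $\B_0$ is a $p$-basis of $A_0$ over $k$---that is, that $A_0 = A_0^p[k][\B_0]$ and that the $p$-monomials in $\B_0$ are linearly independent over $A_0^p[k]$. The computation that underlies both parts is the description of $A^p$. Since $\mathrm{char}(A) = p$, the Frobenius map is a ring homomorphism, so for $f = \sum_\alpha a_\alpha x^\alpha \in A_0[x_1,\dotsc,x_n]$ one has $f^p = \sum_\alpha a_\alpha^p x^{p\alpha}$; it follows that $A^p = A_0^p[x_1^p, \dotsc, x_n^p]$ and hence $A^p[k] = A_0^p[k][x_1^p, \dotsc, x_n^p]$.

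For the generation condition I would simply substitute this description:
\[
	A^p[k][\B] = A_0^p[k][x_1^p,\dotsc,x_n^p][\B_0 \cup \{x_1,\dotsc,x_n\}] = A_0^p[k][\B_0][x_1,\dotsc,x_n],
\]
where the powers $x_i^p$ are absorbed by the variables $x_i$. Since $A_0^p[k][\B_0] = A_0$ by hypothesis, the right-hand side equals $A_0[x_1,\dotsc,x_n] = A$, as required.

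The substance of the lemma is the $p$-independence, and this is where I expect the only genuine bookkeeping. First I would note that, as $\B_0$ and $\{x_1, \dotsc, x_n\}$ are disjoint and the $x_i$ are variables, every $p$-monomial in $\B$ factors uniquely as $m_0 \cdot x^\beta$ with $m_0$ a $p$-monomial in $\B_0$ and $0 \le \beta_i < p$. A hypothetical $A^p[k]$-linear relation among these monomials can then be written $\sum_{m_0, \beta} c_{m_0,\beta}\, m_0\, x^\beta = 0$ with coefficients $c_{m_0,\beta} \in A^p[k] = A_0^p[k][x_1^p,\dotsc,x_n^p]$. Expanding each coefficient as $c_{m_0,\beta} = \sum_\gamma d_{m_0,\beta,\gamma}\, x^{p\gamma}$ with $d_{m_0,\beta,\gamma} \in A_0^p[k]$ turns the relation into $\sum d_{m_0,\beta,\gamma}\, m_0\, x^{\beta + p\gamma} = 0$ inside the polynomial ring $A_0[x_1,\dotsc,x_n]$.

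The crucial point is that the exponent $\beta + p\gamma$ determines the pair $(\beta, \gamma)$ uniquely, since each $\beta_i \in \{0, \dotsc, p-1\}$ is precisely the base-$p$ residue of the $i$th component of $\beta + p\gamma$ and $\gamma_i$ the corresponding quotient. Hence collecting the coefficient of each monomial $x^\delta$ separates the relation into the independent conditions $\sum_{m_0} d_{m_0,\beta,\gamma}\, m_0 = 0$ in $A_0$, one for each $\delta = \beta + p\gamma$. Since the $d_{m_0,\beta,\gamma}$ lie in $A_0^p[k]$ and the $m_0$ are exactly the $p$-monomials in $\B_0$, the $p$-independence of $\B_0$ over $k$ forces every $d_{m_0,\beta,\gamma}$ to vanish, whence every $c_{m_0,\beta} = 0$. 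This establishes the $p$-independence of $\B$, completing the proof. The main obstacle here is purely notational---keeping the carry-free base-$p$ decomposition of the exponents straight---rather than conceptual; alternatively one could package the same argument through the presentation in Lemma~\ref{lm:p-basis-presentation}, combining the presentation of $A_0$ with that of a polynomial ring, but the direct verification seems most transparent.
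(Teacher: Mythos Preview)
Your proof is correct and follows essentially the same approach as the paper's: both identify $A^p[k] = A_0^p[k][x_1^p,\dotsc,x_n^p]$ and then verify that the $p$-monomials in $\B$ form a free $A^p[k]$-basis of $A$. The paper phrases this last step as a one-line free-module assertion, whereas you unpack it explicitly via the base-$p$ decomposition of exponents; the content is the same.
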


\begin{proof}
Observe that $A^p[k] = A_0^p[k][x_1^p, \dotsc, x_n^p]$. Since $A_0$ is a free $A_0^p[k]$-module with the the set of monomials
\[
	\left \{b_1^{\alpha_1} \cdot\dotsc\cdot b_r^{\alpha_r}
	\mid b_i \in \B_0, \; 0 \leq \alpha_i < p \right \}
\]%
as a basis, it follows that $A$ is a free $A^p[k]$-module with the set of monomials
\[
	\left \{b_1^{\alpha_1} \cdot\dotsc\cdot b_r^{\alpha_r}
		x_1^{\beta_1} \cdot \dotsc \cdot x_n^{\beta_n}
	\mid b_i \in \B, \; 0 \leq \alpha_i < p, \; 0 \leq \beta_j < p \right \}
\]%
as a basis. Hence $\B$ is a $p$-basis of $A$ over $k$.
\end{proof}

\begin{remark}
The reader should be aware that, in general, under the hypotheses of the lemma, the set $\B_0 \cup \{x_1, \dotsc, x_n\}$ is not a $p$-basis of $A_0 [[ x_1, \dotsc, x_n ]]$ over $k$ (see \cite[Example~3.4]{KimuraNiitsuma80}).
\end{remark}

\begin{lemma}[Localization]  \label{lm:p-basis:localization}
Let $k$ be a ring over $\mathbb{F}_p$ and let $A$ be an arbitrary $k$-algebra which has a $p$-basis over $k$, say $\B$. Consider a multiplicative subset $S \subset A$ which does not contain nilpotent elements. Then the image of $\B$ in $S^{-1}A$ is a $p$-basis of $S^{-1}A$ over $k$.
\end{lemma}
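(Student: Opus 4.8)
The plan is to reduce everything to the freeness characterization recorded just after the definition: $\B$ is a $p$-basis of $A$ over $k$ precisely when $A$ is a free $A^p[k]$-module with the set $M$ of $p$-monomials in $\B$ as a basis. Thus the goal becomes to show that $S^{-1}A$ is a free $(S^{-1}A)^p[k]$-module with the images of the $p$-monomials in $\B$ as a basis. The strategy is to realize both the ring $S^{-1}A$ and the subring $(S^{-1}A)^p[k]$ as localizations of $A$ and of $A^p[k]$ respectively, at the \emph{single} multiplicative set $S^p = \{s^p \mid s \in S\} \subseteq A^p \subseteq A^p[k]$, and then invoke the fact that localization carries a free module with a given basis to a free module with the image basis.

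First I would set $R = A^p[k]$ and $T = S^p \subseteq R$, so that by hypothesis $A = \bigoplus_{m \in M} R\,m$. Localizing this decomposition at $T$ shows that $T^{-1}A$ is a free $T^{-1}R$-module with the images of $M$ as a basis; here I use that localization commutes with arbitrary direct sums and that the images of a basis form a basis of the localized module, so in particular they remain distinct. It then remains to identify $T^{-1}A$ with $S^{-1}A$ and $T^{-1}R$ with the subring $(S^{-1}A)^p[k] \subseteq S^{-1}A$.

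For the first identification, the key observation is the Frobenius identity $s \cdot s^{p-1} = s^p$: in $T^{-1}A = (S^p)^{-1}A$ every $s^p$ is a unit, hence so is every $s \in S$, and conversely in $S^{-1}A$ every $s^p$ is a unit; the universal property of localization then produces mutually inverse $A$-algebra maps between $(S^p)^{-1}A$ and $S^{-1}A$. For the second identification I would first check that $(S^{-1}A)^p = (S^p)^{-1}A^p$ inside $S^{-1}A$: the natural map $(S^p)^{-1}A^p \to S^{-1}A$ is surjective onto the ring of $p$-th powers since $(a/s)^p = a^p/s^p$, and it is injective because if $t\,a^p = 0$ for some $t \in S$ then $t^p a^p = t^{p-1}(t a^p) = 0$, so the numerator is already annihilated by $t^p \in S^p$. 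Adjoining $k$ then gives $(S^{-1}A)^p[k] = (S^p)^{-1}(A^p[k]) = T^{-1}R$, and by exactness of localization this inclusion into $S^{-1}A$ matches the localized inclusion $T^{-1}R \hookrightarrow T^{-1}A$, so the module structures agree.

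Combining these identifications, $S^{-1}A$ is a free $(S^{-1}A)^p[k]$-module whose basis is the set of images of the $p$-monomials in $\B$, that is, the $p$-monomials in the image $\overline{\B}$; by the freeness characterization this is exactly the assertion that $\overline{\B}$ is a $p$-basis of $S^{-1}A$ over $k$. The only role of the hypothesis on $S$ is to keep this conclusion non-degenerate: a multiplicative set containing a nilpotent $s$ contains $s^n = 0$, hence contains $0$, which would force $S^{-1}A = 0$ and collapse every $p$-monomial; excluding nilpotents guarantees $0 \notin S$ and so $S^{-1}A \neq 0$, whence the images of the $p$-monomials are genuinely linearly independent. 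I expect the main technical point to be the two ring identifications — especially the injectivity of $(S^p)^{-1}A^p \to S^{-1}A$ in the possible presence of nilpotents in $A$ — rather than the transfer of freeness, which is routine once the identifications are in place.
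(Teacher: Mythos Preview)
Your proposal is correct and follows essentially the same approach as the paper: introduce $S^p$, identify $S^{-1}A$ with $(S^p)^{-1}A = A \otimes_{A^p[k]} (S^p)^{-1}(A^p[k])$, and transfer the free $A^p[k]$-module structure by localizing at $S^p$. You are simply more explicit than the paper about the identification $(S^{-1}A)^p[k] \cong (S^p)^{-1}(A^p[k])$ and about the role of the no-nilpotents hypothesis, both of which the paper leaves implicit (the latter is treated only in the remark following the proof).
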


\begin{proof}
Set $S^{p} = \{ s^p \mid s \in S \}$, which is a multiplicative subset of $A^p[k]$. Observe that
\[
	S^{-1}A
	= (S^p)^{-1} A
	= A \otimes_{A^p[k]} (S^p)^{-1} (A^p[k]).
\]%
Since $A$ is a free $A^p[k]$-module with the set of monomials of the form $b_1^{\alpha_1} \cdot\dotsc\cdot b_r^{\alpha_r}$ with $b_i \in \B$ and $0 \leq \alpha_i < p$ as basis, the image of these monomials is also a basis of $S^{-1}A$ over $(S^p)^{-1} (A^p[k])$. Hence the image of $\B$ is a $p$-basis of $S^{-1}A$ over $k$.
\end{proof}

\begin{remark}
Let $\B$ be a $p$-basis of an algebra $A$ over a ring $k$. Suppose that a $p$-monomial in $\B$, say $b_1^{\alpha_1} \cdot\dotsc\cdot b_r^{\alpha_r}$, has a zero-divisor in $A$, say $a \cdot b_1^{\alpha_1} \cdot\dotsc\cdot b_r^{\alpha_r} = 0$. In this case,
\[
	a^p \cdot b_1^{\alpha_1} \cdot\dotsc\cdot b_r^{\alpha_r} = 0.
\]%
Then, by the property of $p$-independence of $\B$, we have that $a^p = 0$. Thus we see that all the zero-divisors of the $p$-monomials in $\B$ are nilpotent.
\end{remark}

\section{Differential bases, \texorpdfstring{$p$}{p}-bases, and derivatives}

\label{sec:differential-bases}

Fix an arbitrary ring $k$ containing $\mathbb{F}_p$, and let $A$ be an arbitrary $k$-algebra. The module of K\"ahler differentials of $A$ over $k$ is defined by $\Kahler_{A/k} = I_{A/k} / I_{A/k}^2$, where $I_{A/k}$ represents the kernel of the diagonal morphism $\delta : A \otimes_k A \to A$ given by $\delta(a_1 \otimes a_2) = a_1 a_2$. We shall denote the universal derivation of $A$ over $k$ by $d_{A/k} : A \to \Kahler_{A/k}$. Recall that, regarded as an $A$-module, $\Kahler_{A/k}$ is generated by the elements of the form $d_{A/k}(a)$ with $a \in A$.

\begin{definition}  \label{def:differential-basis}
We will say that a subset $\B \subset A$ is a \emph{differential basis of $A$ over $k$} if $\Kahler_{A/k}$ is a free $A$-module with the set $d_{A/k}(\B) = \{d_{A/k}(b) \mid b \in \B \}$ as a basis. A differential basis of $A$ over the prime field $\mathbb{F}_p$ will also be called an \emph{absolute differential basis} of $A$.
\end{definition}

As it will be shown in the following lines, if $\B$ is a $p$-basis of $A$ over $k$, then $\B$ is a differential basis of $A$ over $k$ (see Proposition~\ref{prop:differential-basis}). Conversely, if $\B$ is a differential basis of $A$ over $k$, then $\B$ is $p$-independent over $k$ (see Lemma~\ref{lm:diff-basis-implies-p-indep}). However, in general, a differential basis of $A$ over $k$ might not be a $p$-basis of $A$, as $A^p[k][\B]$ could be a proper subset of $A$.

\begin{proposition} \label{prop:differential-basis}
Let $A$ be an algebra over a ring $k$ containing $\mathbb{F}_p$. If $\B$ is a $p$-basis of $A$ over $k$, then $\Kahler_{A/k}$ is a free $A$-module with the set $d_{A/k}(\B)$ as basis \textup{(}i.e., $\B$ is a differential basis of $A$ over $k$\textup{)}.
\end{proposition}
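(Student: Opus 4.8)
The plan is to establish two separate facts: that $d_{A/k}(\B)$ generates $\Kahler_{A/k}$ as an $A$-module, and that this generating set is $A$-linearly independent. The single observation that drives both steps is that the universal derivation $d_{A/k}$ annihilates the subring $A^p[k]$. Indeed, $d_{A/k}$ is $k$-linear, so it kills $k$, and for any $a \in A$ one has $d_{A/k}(a^p) = p\,a^{p-1}\,d_{A/k}(a) = 0$ since $\operatorname{char}(A) = p$. As $A^p[k]$ is generated as a ring by $k$ together with the $p$-th powers $a^p$, the Leibniz rule forces $d_{A/k}$ to vanish on all of $A^p[k]$.

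For the generation step I would use that $A = A^p[k][\B]$, so every $a \in A$ is a polynomial in finitely many elements of $\B$ with coefficients in $A^p[k]$. Applying $d_{A/k}$ and expanding by the Leibniz rule, the differentials of the coefficients vanish by the remark above, while each monomial in the elements of $\B$ contributes an $A$-linear combination of the $d_{A/k}(b)$. Hence $\Kahler_{A/k} = \sum_{b \in \B} A\, d_{A/k}(b)$.

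For linear independence, the cleanest route is to produce, for each $b_0 \in \B$, a $k$-derivation $\partial_{b_0}\colon A \to A$ with $\partial_{b_0}(b) = \delta_{b_0 b}$ for all $b \in \B$. To build it I would invoke the presentation of Lemma~\ref{lm:p-basis-presentation}, writing $A \simeq P/I$ with $P = A^p[k][t_b \mid b \in \B]$ and $I = \langle t_b^p - b^p \mid b \in \B\rangle$. On the polynomial ring $P$ there is the usual partial derivative $\partial/\partial t_{b_0}$, which is an $A^p[k]$-derivation and in particular a $k$-derivation. The key point is that it sends every generator of $I$ to zero: $\partial/\partial t_{b_0}(t_b^p - b^p) = p\,t_b^{p-1}\,\delta_{b_0 b} - 0 = 0$, because $b^p$ lies in the coefficient ring $A^p[k]$ (so it is a constant with respect to the variables $t_b$) and $p = 0$ in $P$. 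Consequently $\partial/\partial t_{b_0}(I) \subseteq I$, so $\partial/\partial t_{b_0}$ descends to a $k$-derivation $\partial_{b_0}$ on $A = P/I$, which by construction satisfies $\partial_{b_0}(b) = \delta_{b_0 b}$.

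Finally, each such $\partial_{b_0}$ corresponds, by the universal property of $\Kahler_{A/k}$, to an $A$-linear map $\Kahler_{A/k} \to A$ sending $d_{A/k}(b) \mapsto \delta_{b_0 b}$. Applying these maps to a hypothetical finite relation $\sum_{b} a_b\, d_{A/k}(b) = 0$ picks off $a_{b_0} = 0$ for every $b_0$, which gives the independence; together with the generation step this shows $d_{A/k}(\B)$ is a free basis. I expect the main subtlety to be the descent of $\partial/\partial t_{b_0}$ to the quotient, namely verifying $\partial/\partial t_{b_0}(I) \subseteq I$, but this hinges entirely on $b^p$ being constant in the $t$-variables and on working in characteristic $p$; the remaining verifications are routine bookkeeping with the Leibniz rule.
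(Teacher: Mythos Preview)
Your argument is correct. Both your proof and the paper's hinge on the presentation $A \simeq A^p[k][t_b]/\langle t_b^p - b^p\rangle$ from Lemma~\ref{lm:p-basis-presentation} and on the observation that each generator $t_b^p - b^p$ has zero differential, but the two proofs package this differently. The paper invokes the second fundamental exact sequence $J/J^2 \to \Kahler_{C/C_0}\otimes_C A \to \Kahler_{A/k} \to 0$ (with $C_0 = A^p[k]$, $C = C_0[t_b]$, $J = \langle t_b^p - b^p\rangle$), notes that the first map is zero, and transports the free basis $\{d t_b\}$ of $\Kahler_{C/C_0}$ directly to $\Kahler_{A/k}$. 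You instead prove generation and independence separately by hand: generation via the Leibniz rule and the vanishing of $d_{A/k}$ on $A^p[k]$, and independence by explicitly descending the partial derivatives $\partial/\partial t_{b_0}$ to $A$ and using them as coordinate projections. Your route is more elementary---it avoids citing the exact sequence---and has the pleasant side effect of constructing the derivations $\frac{\partial}{\partial b}$ directly, which the paper only obtains afterward in Corollary~\ref{crl:partial-b}; the paper's route is shorter and more conceptual.
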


\begin{proof}
Set $C_0 = A^p[k]$. Consider the polynomial ring $C = C_0[t_b \mid b \in \B]$  and the ideal $J = \langle t_b^p - b^p \mid b \in \B \rangle \subset C$. Recall that, by Lemma~\ref{lm:p-basis-presentation}, $A = C / J$. Note also that $\Kahler_{A/k} = \Kahler_{A/C_0}$. Then, by
\cite[Theorem~25.2]{MatsumuraRingTheory},
there is an exact sequence of $A$-modules
\[ \xymatrix{
	J / J^2  \ar[rr]^-{\overline{d_{C / C_0}}} &
	&
	\Kahler_{C / C_0} \otimes_{C} A  \ar[r] &
	\Kahler_{A / k}  \ar[r] &
	0.
} \]%
Since $t_b^p \in C^p$ and $b^p \in C_0$, one readily checks that $J / J^2$ is mapped to zero via $\overline{d_{C / C_0}}$. Thus we get a natural isomorphism, say
\[ \xymatrix{
	\Kahler_{C / C_0} \otimes_{C} A  \ar[r]^-{\sim} &
	\Kahler_{A / k},
} \]%
which maps the class of $d_{C / C_0}(t_b)$ to $d_{A/k}(b)$ for each $b \in \B$. Hence $\Kahler_{A/k}$ is a free $A$-module with the set $d_{A/k}(\B) = \{ d_{A/k}(b) \mid b \in \B \}$ as a basis.
\end{proof}

\begin{corollary}  \label{crl:partial-b}
Let $A$ be an algebra over a ring $k$ containing $\mathbb{F}_p$. Assume that $A$ admits a $p$-basis over $k$, say $\B$. Then, for each $b \in \B$, there exists a unique $k$-linear derivative from $A$ to $A$, which we shall denote by $\frac{\partial}{\partial b} : A \to A$, such that $\frac{\partial}{\partial b} (c) = \delta_{bc}$ \textup{(}Kronecker delta\textup{)} for all $c \in \B$.
\end{corollary}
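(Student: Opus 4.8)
The plan is to deduce this directly from the universal property of the module of Kähler differentials together with the freeness established in Proposition~\ref{prop:differential-basis}. Recall that $d_{A/k} : A \to \Kahler_{A/k}$ is the universal $k$-derivation, so that for any $A$-module $M$ the assignment $\varphi \mapsto \varphi \circ d_{A/k}$ gives a natural bijection between $\mathrm{Hom}_A(\Kahler_{A/k}, M)$ and the set of $k$-linear derivations from $A$ to $M$. Taking $M = A$, the $k$-linear derivatives $A \to A$ are thus in one-to-one correspondence with the $A$-linear maps $\Kahler_{A/k} \to A$.

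First I would invoke Proposition~\ref{prop:differential-basis}, which tells us that $\Kahler_{A/k}$ is a free $A$-module with basis $d_{A/k}(\B) = \{ d_{A/k}(c) \mid c \in \B \}$. Since an $A$-linear map out of a free module is determined by, and may be prescribed arbitrarily on, a basis, for each fixed $b \in \B$ there is a unique $A$-linear homomorphism $\varphi_b : \Kahler_{A/k} \to A$ with $\varphi_b\bigl(d_{A/k}(c)\bigr) = \delta_{bc}$ for every $c \in \B$. I would then define $\frac{\partial}{\partial b} := \varphi_b \circ d_{A/k} : A \to A$. By construction this is a $k$-linear derivation, and it satisfies $\frac{\partial}{\partial b}(c) = \varphi_b\bigl(d_{A/k}(c)\bigr) = \delta_{bc}$ for all $c \in \B$, as required.

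For uniqueness I would run the correspondence backwards: if $D : A \to A$ is any $k$-linear derivation with $D(c) = \delta_{bc}$ for all $c \in \B$, then under the bijection above it corresponds to an $A$-linear map $\psi : \Kahler_{A/k} \to A$ satisfying $\psi\bigl(d_{A/k}(c)\bigr) = D(c) = \delta_{bc}$. Since $d_{A/k}(\B)$ is a free basis, $\psi$ agrees with $\varphi_b$ on the whole of $\Kahler_{A/k}$, hence $\psi = \varphi_b$ and therefore $D = \psi \circ d_{A/k} = \varphi_b \circ d_{A/k} = \frac{\partial}{\partial b}$.

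There is no substantive obstacle here; the content of the statement is really packaged inside Proposition~\ref{prop:differential-basis}, and the only point requiring a little care is to state the universal property in the correct form (derivations valued in $A$ itself, i.e.\ $M = A$) so that prescribing values on the basis $d_{A/k}(\B)$ produces an \emph{endomorphism-valued} derivation rather than merely an element of the dual of $\Kahler_{A/k}$. Once that identification is in place, both existence and uniqueness are immediate from the freeness of $\Kahler_{A/k}$.
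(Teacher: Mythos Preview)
Your argument is correct and is exactly the approach the paper takes: the paper's proof is the single line $\Der_k(A) \simeq \Hom_A(\Kahler_{A/k}, A)$, which together with Proposition~\ref{prop:differential-basis} is precisely what you have unpacked. You have simply made explicit the use of the free basis $d_{A/k}(\B)$ to define and uniquely determine the dual derivations.
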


\begin{proof}
$\Der_k(A) \simeq \Hom_A(\Kahler_{A/k}, A)$ (cf. \cite[\S25, p.~192]{MatsumuraRingTheory}).
\end{proof}

\begin{corollary}
Let $\B$ be a $p$-basis of $A$ over $k$ as in the previous corollary. Then, for each $f \in A$, there is a finite number of elements in $\B$, say $b_1, \dotsc, b_m \in \B$, such that $\frac{\partial}{\partial b_i}(f) \neq 0$.
\end{corollary}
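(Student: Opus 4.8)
The plan is to exploit the structural description of $A$ afforded by the $p$-basis: as observed right after Lemma~\ref{lm:p-basis-presentation}, since $\B$ is a $p$-basis of $A$ over $k$, the ring $A$ is a free $A^p[k]$-module with the set of $p$-monomials in $\B$ as a basis. I would begin by expanding $f$ in this basis, obtaining a \emph{finite} expression
\[
	f = \sum_{j=1}^s c_j \, m_j, \qquad c_j \in A^p[k],
\]
where each $m_j = b_{j,1}^{\alpha_{j,1}} \cdots b_{j,r_j}^{\alpha_{j,r_j}}$ is a $p$-monomial in $\B$. The crucial point is that, the sum being finite, only finitely many elements of $\B$ occur among the $m_j$; let $\{b_1, \dotsc, b_m\}$ denote this finite collection.

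Next I would show that $\frac{\partial}{\partial b}(f) = 0$ for every $b \in \B$ outside this finite set, which is exactly the assertion. Fix such a $b$. Recall from Corollary~\ref{crl:partial-b} that $\frac{\partial}{\partial b}$ is a $k$-linear derivation of $A$. I would first record that it annihilates the whole coefficient ring $A^p[k]$: any derivation kills $k$ (being the image of $1$) and kills every $p$-th power, since $\frac{\partial}{\partial b}(a^p) = p\, a^{p-1} \frac{\partial}{\partial b}(a) = 0$ in characteristic $p$; by the Leibniz rule and additivity it therefore vanishes on the subring $A^p[k]$ generated by $A^p$ and $k$. In particular $\frac{\partial}{\partial b}(c_j) = 0$ for every $j$.

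Applying the Leibniz rule to each term $c_j m_j$ and using $\frac{\partial}{\partial b}(c_j) = 0$, the computation reduces to the monomials $\frac{\partial}{\partial b}(m_j)$. Differentiating $m_j$ produces a sum of terms each carrying a factor $\frac{\partial}{\partial b}(b_{j,\ell}) = \delta_{b,\, b_{j,\ell}}$; since $b$ differs from every $b_{j,\ell} \in \{b_1, \dotsc, b_m\}$, all these Kronecker deltas vanish, whence $\frac{\partial}{\partial b}(m_j) = 0$. Therefore $\frac{\partial}{\partial b}(f) = 0$, and the set of $b \in \B$ with $\frac{\partial}{\partial b}(f) \neq 0$ is contained in the finite set $\{b_1, \dotsc, b_m\}$.

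I do not anticipate any serious obstacle: the argument rests entirely on the finiteness of the expansion of $f$ in the free $A^p[k]$-basis of $p$-monomials, combined with the defining relation $\frac{\partial}{\partial b}(b') = \delta_{bb'}$ and the vanishing of $\frac{\partial}{\partial b}$ on the coefficient ring. The only point deserving care is the verification that $\frac{\partial}{\partial b}$ kills all of $A^p[k]$ rather than merely $A^p$ or $k$ separately; this follows immediately from the Leibniz rule once one knows it annihilates the generators.
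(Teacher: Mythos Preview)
Your argument is correct. The paper gives no proof of this corollary; in context it is meant to follow immediately from Proposition~\ref{prop:differential-basis}: since $\{d_{A/k}(b) \mid b \in \B\}$ is a free basis of $\Kahler_{A/k}$, the element $d_{A/k}(f)$ has a \emph{finite} expansion $d_{A/k}(f) = \sum a_b\, d_{A/k}(b)$, and by the definition of $\frac{\partial}{\partial b}$ as the dual basis one has $a_b = \frac{\partial}{\partial b}(f)$, so only finitely many of these are nonzero. Your route instead uses the free $A^p[k]$-module structure of $A$ directly, expanding $f$ rather than $d_{A/k}(f)$; this is equally valid and a bit more hands-on, at the modest cost of verifying separately that $\frac{\partial}{\partial b}$ annihilates $A^p[k]$. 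One small wording quibble: the parenthetical ``being the image of $1$'' is not quite the reason the derivation vanishes on $k$---rather, $\frac{\partial}{\partial b}$ is $k$-linear by construction (Corollary~\ref{crl:partial-b}), hence kills $k$ automatically.
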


\begin{remark}
Let $\B$ be a $p$-basis of $A$ over $k$ as in Proposition~\ref{prop:differential-basis}, and consider a $k$-linear derivative $\delta \in \Der_k(A)$. Since $\Kahler_{A/k}$ is a free $A$-module with the set $\{ d_{A/k}(b) \mid b \in \B \}$ as a basis, one readily checks that
\[
	\delta
	= \sum_{b \in \B} \delta(b) \cdot \frac{\partial}{\partial b}.
\]%
Note that, in principle, this might be an infinite sum. However, given a fixed element $f \in A$, the previous corollary says that there is a finite number of elements $b_1, \dotsc, b_m \in \B$ such that $\frac{\partial}{\partial b_i}(f) \neq 0$. Thus $\delta(f) = \sum_{i = 1}^m \delta(b_i) \cdot \frac{\partial}{\partial b_i}(f),$
which is a finite sum.
\end{remark}

\begin{lemma}  \label{lm:diff-basis-implies-p-indep}
Let $A$ be an algebra over a ring $k$ containing $\mathbb{F}_p$. Assume that a subset $\B$ is a differential basis of $A$ over $k$. Then $\B$ is $p$-independent over $k$.
\end{lemma}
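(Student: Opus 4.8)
The plan is to construct, for each $b \in \B$, a derivation $\partial_b \colon A \to A$ directly from the differential-basis hypothesis, and then to extract the coefficients of an arbitrary $A^p[k]$-linear relation among $p$-monomials one variable at a time. First I would observe that, although Corollary~\ref{crl:partial-b} is stated for $p$-bases, its proof uses only that $\Kahler_{A/k}$ is free on $d_{A/k}(\B)$, which is exactly the differential-basis hypothesis. Composing $d_{A/k}$ with the $A$-linear projection $\Kahler_{A/k} \to A$ onto the $b$-th coordinate then produces, for each $b \in \B$, a $k$-derivation $\partial_b \colon A \to A$ with $\partial_b(c) = \delta_{bc}$ for all $c \in \B$.

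Next I would record the two structural properties I need. Since $\partial_b$ is a $k$-derivation with $\partial_b(a^p) = p\,a^{p-1}\partial_b(a) = 0$ in characteristic $p$, it vanishes on the subring $A^p[k]$ and is therefore $A^p[k]$-linear. And for $c \in \B$ the Leibniz rule gives $\partial_b(c^{\,j}) = j\,c^{\,j-1}\delta_{bc}$, so that $\partial_b^{\,s}(c^{\,j}) = j(j-1)\cdots(j-s+1)\,c^{\,j-s}\delta_{bc}$ when $b = c$ and $j \ge s$, and vanishes otherwise. I would also note that every integer $j$ with $1 \le j \le p-1$ has invertible image in $\mathbb{F}_p \subset A$, so each falling factorial $j(j-1)\cdots(j-s+1)$ appearing above (with $s \le j < p$) is a unit in $A$.

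Now suppose $\sum_{\vec{\alpha}} c_{\vec{\alpha}}\, b_1^{\alpha_1}\cdots b_r^{\alpha_r} = 0$ is a finite relation with distinct $b_1,\dots,b_r \in \B$, exponents $0 \le \alpha_i < p$, and coefficients $c_{\vec{\alpha}} \in A^p[k]$; the goal is $c_{\vec{\alpha}} = 0$ for all $\vec{\alpha}$, and I would argue by induction on $r$, the case $r = 0$ (a single coefficient times $1$) being trivial. Grouping by the power of $b_r$, I rewrite the relation as $\sum_{j=0}^{p-1} P_j\, b_r^{\,j} = 0$, where each $P_j$ is an $A^p[k]$-combination of $p$-monomials in $b_1,\dots,b_{r-1}$. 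Because $\partial_{b_r}$ is $A^p[k]$-linear and kills $b_1,\dots,b_{r-1}$, the second property gives $\partial_{b_r}(P_j) = 0$, so applying $\partial_{b_r}^{\,s}$ to the relation yields $\sum_{j \ge s} P_j\,\tfrac{j!}{(j-s)!}\,b_r^{\,j-s} = 0$. Taking $s = p-1$ leaves only $P_{p-1}(p-1)! = 0$, whence $P_{p-1} = 0$ since $(p-1)!$ is a unit; a descending induction on $s$ then forces $P_j = 0$ for every $j$. Each identity $P_j = 0$ is a relation among $p$-monomials in the $r-1$ elements $b_1,\dots,b_{r-1}$, so the inductive hypothesis gives $c_{\vec{\alpha}} = 0$ throughout.

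The main obstacle I anticipate is bookkeeping rather than conceptual. One must check carefully that the $\partial_b$ descend to genuinely $A^p[k]$-linear operators, so that they can be applied to a relation with $A^p[k]$-coefficients without disturbing those coefficients, and that the falling factorials produced by iterated differentiation remain units in characteristic $p$ — which holds precisely because all exponents are $< p$. Notably, no commutativity among the $\partial_b$ is required, since the induction peels off a single variable at a time.
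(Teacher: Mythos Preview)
Your proof is correct and rests on the same mechanism as the paper's: construct the derivations $\partial_b$ from the free-module hypothesis on $\Kahler_{A/k}$, observe they are $A^p[k]$-linear, and extract coefficients by iterated differentiation, using that factorials of integers below $p$ are units. The organization differs. The paper argues in one stroke by contradiction: it picks a monomial $b_1^{\alpha_1}\cdots b_r^{\alpha_r}$ of \emph{maximal total degree} among those with nonzero coefficient and applies the single composite operator $\tfrac{1}{\alpha_1!\cdots\alpha_r!}\,\partial_{b_1}^{\alpha_1}\cdots\partial_{b_r}^{\alpha_r}$, which annihilates every other term and returns that coefficient directly. You instead run an induction on the number $r$ of variables, peeling off powers of $b_r$ by a descending sweep of $\partial_{b_r}^{s}$ and then recursing on the resulting relations in $b_1,\dots,b_{r-1}$. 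Your variant has the feature you point out---only single-variable iterates appear, so no interaction among different $\partial_b$'s needs to be tracked---while the paper's argument is a couple of lines shorter. Both are entirely adequate.
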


\begin{proof}
We will proceed by contradiction. Assume that there exists a non-trivial relation of $p$-dependence in $\B$, say $F(b_1, \dotsc, b_r) = 0$, where $b_1, \dotsc, b_r \in \B$, and
\[
	F(b_1, \dotsc, b_r)
	= \sum_{0 \leq \alpha_1, \dotsc, \alpha_r < p}
		a_{\alpha_1, \dotsc, \alpha_r}
		b_1^{\alpha_1} \cdot \dotsc \cdot b_r^{\alpha_r}
	\in A^p[k][b_1, \dotsc, b_r].
\]%
Then choose $\alpha_1, \dotsc, \alpha_r$ so that $\alpha_1 + \dotsb + \alpha_r$ is maximum with $a_{\alpha_1, \dotsc, \alpha_r} \neq 0$. Since $\B$ is a differential basis of $A$ over $k$, one can define derivatives $\frac{\partial}{\partial b_1}, \dotsc, \frac{\partial}{\partial b_r}$ from $A$ to $A$ such that $\frac{\partial}{\partial b_i}(b_j) = \delta_{ij}$ (Kronecker delta) for all $i,j$. Note that all these derivatives are $A^p[k]$-linear. Thus one readily checks that
\[
	\frac{1}{\alpha_1 ! \dotsb \alpha_r !} \cdot
	\frac{\partial^{\alpha_1 + \dotsb + \alpha_r}}
		{\partial b_1^{\alpha_1} \dotsb \partial b_r^{\alpha_r}}
	F(b_1, \dotsc, b_r)
	= a_{\alpha_1, \dotsc, \alpha_r} \neq 0.
\]%
Since $F(b_1, \dotsc, b_r) = 0$, this leads to a contradiction.
\end{proof}

\section{Absolute \texorpdfstring{$p$}{p}-bases on regular varieties}

\label{sec:regular-varieties}

\begingroup  %

Consider regular variety $Z$ defined over a possibly non-perfect field $k$ of characteristic $p > 0$. In this section we will prove that, for every $\xi \in Z$, there exists an affine neighborhood of $Z$ at $\xi$, say $\xi \in \Spec(A) \subset Z$, so that $A$ admits an absolute $p$-basis (see Theorem~\ref{thm:variety-admits-p-basis}).

\begin{theorem}[Kimura and Niitsuma \cite{KimuraNiitsuma84}]
\label{thm:KimuraNiitsuma}
Let $R$ be regular local ring of characteristic $p > 0$. Assume that $R$ has an absolute differential basis, say $\B$ \textup{(}that is, $\B$ is a differential basis of $R$ over $\mathbb{F}_p$\textup{)}. Then $\B$ is an absolute $p$-basis of $R$.
\end{theorem}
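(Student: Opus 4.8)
The plan is to treat the two defining requirements of an absolute $p$-basis separately. \emph{$p$-independence} is free of charge: a differential basis over $\mathbb{F}_p$ is automatically $p$-independent over $\mathbb{F}_p$ by Lemma~\ref{lm:diff-basis-implies-p-indep}. So the entire content is the \emph{generation} assertion $R = R^p[\B]$. I would set $R' = R^p[\B] \subseteq R$, the subalgebra generated by $\B$ over the ring of $p$-th powers, and first record that, by construction, $\B$ is a $p$-basis of $R'$ over the ring $R^p$: it generates $R'$ as an $R^p$-algebra, and its $p$-independence over $R^p = (R')^p[R^p]$ is inherited from $R$. Hence Proposition~\ref{prop:differential-basis}, applied with base ring $k = R^p$, gives that $\Kahler_{R'/R^p}$ is free over $R'$ with basis $\{d_{R'/R^p}(b) \mid b \in \B\}$. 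On the other hand, since the universal derivation kills $p$-th powers, one has $\Kahler_{R/\mathbb{F}_p} = \Kahler_{R/R^p}$, so the hypothesis says precisely that $\Kahler_{R/R^p}$ is free over $R$ with basis $\{d_{R/R^p}(b) \mid b \in \B\}$.

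Next I would feed the tower $R^p \subseteq R' \subseteq R$ into the standard exact sequence of K\"ahler differentials,
\[
\Kahler_{R'/R^p} \otimes_{R'} R \xrightarrow{\ \alpha\ } \Kahler_{R/R^p} \longrightarrow \Kahler_{R/R'} \longrightarrow 0 .
\]
The map $\alpha$ sends $d_{R'/R^p}(b) \otimes 1$ to $d_{R/R^p}(b)$; that is, it carries the distinguished free basis of the left-hand module bijectively onto the distinguished free basis of the middle module, both being indexed by $\B$. Therefore $\alpha$ is an isomorphism and $\Kahler_{R/R'} = 0$. Up to this point regularity has not been used: the work has only served to convert the hypothesis on $\Kahler_{R/\mathbb{F}_p}$ into the vanishing of the \emph{relative} differentials of the purely inseparable extension $R' \subseteq R$ (note that every $x \in R$ satisfies $x^p \in R^p \subseteq R'$, so $R$ is integral and radicial over $R'$).

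The remaining step, upgrading $\Kahler_{R/R'} = 0$ to $R = R'$, is where I expect the \textbf{main obstacle} to lie, and where regularity is essential. The tool is Kunz's theorem: regularity forces the Frobenius to be flat, so $R$ is faithfully flat over $R^p$ and the closed fibre $\overline{R} = R/\mathfrak{m}^{[p]}$ (with $\mathfrak{m}^{[p]} = \mathfrak{n} R$, where $\mathfrak{n}$ is the maximal ideal of $R^p$) is Artinian, with nilpotent maximal ideal. Base-changing $\Kahler_{R/R'} = 0$ to this fibre and running a Nakayama-type induction on the nilpotent maximal ideal of $\overline{R}$ should show that $\overline{R}$ is generated by the image of $R'$; the base case is that the residues of $\B$ generate $\kappa = R/\mathfrak{m}$ over $\kappa^p$, which holds because their differentials span $\Kahler_{\kappa/\kappa^p}$. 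This yields $R = R' + \mathfrak{m}^{[p]}$, and iterating (using $\mathfrak{n} R' \subseteq R'$) gives $R = R' + \mathfrak{m}^N$ for every $N$, so that $R'$ is $\mathfrak{m}$-adically dense in $R$. The genuinely delicate point is to deduce $R = R'$ from this density, i.e.\ to prove that $R'$ is $\mathfrak{m}$-adically closed. When $[\kappa : \kappa^p] < \infty$ the extension $R^p \subseteq R$ is module-finite and closedness is immediate from Krull's intersection theorem; in general $\B$ may be infinite, $R$ need not be finite over $R^p$, and one must establish closedness directly from the Noetherian and regularity hypotheses, taking care — in view of the caveat recorded after Lemma~\ref{lm:p-basis:polynomial-extension} — not to pass to the completion, where $\B$ would in general cease to be a $p$-basis.
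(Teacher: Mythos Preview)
The paper does not prove this theorem at all: it is stated with attribution to Kimura and Niitsuma \cite{KimuraNiitsuma84} and used as a black box, so there is no in-paper proof to compare your attempt against. Everything the paper builds on top of it (Corollary~\ref{crl:KimuraNiitsuma:open}, Proposition~\ref{prop:rsp}, etc.) simply invokes the cited result.

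As for your sketch itself: the first half is clean and correct. Reducing to the generation statement $R = R^p[\B]$ via Lemma~\ref{lm:diff-basis-implies-p-indep}, and then converting the differential-basis hypothesis into $\Kahler_{R/R'} = 0$ for $R' = R^p[\B]$ via the first exact sequence for the tower $R^p \subset R' \subset R$, is exactly the right opening move, and you carry it out without error.

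The gap is precisely where you flag it. Your density argument gives $R = R' + \mathfrak m^N$ for all $N$, but passing from $\mathfrak m$-adic density of $R'$ to $R' = R$ is not justified: you would need $R'$ to be a closed $R^p$-submodule of $R$, and when $[\kappa : \kappa^p] = \infty$ the extension $R^p \subset R$ is not module-finite, so neither Artin--Rees nor any off-the-shelf closedness criterion applies. You are right to resist completing, but you have not supplied an alternative. This is not a minor technicality; it is the entire content of the theorem in the infinite case, and the original argument of Kimura and Niitsuma handles it by a different route (working with the structure of $\Kahler_R$ and regular systems of parameters more directly, rather than through density in the $\mathfrak m$-adic topology). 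If you want a self-contained proof you should consult \cite{KimuraNiitsuma84}; otherwise, do as the paper does and cite it.
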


As we show below, this result can be easily extended to the case of regular domains which are not necessarily local.

\begin{corollary} \label{crl:KimuraNiitsuma:open}
Let $A$ be a regular domain of characteristic $p > 0$. Suppose that $A$ has an absolute differential basis, say $\B$. Then $\B$ is an absolute $p$-basis of $A$.
\end{corollary}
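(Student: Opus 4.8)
The plan is to reduce the statement to the local theorem of Kimura and Niitsuma (Theorem~\ref{thm:KimuraNiitsuma}) by a local-to-global argument. First observe that, by Lemma~\ref{lm:diff-basis-implies-p-indep}, a differential basis $\B$ is automatically $p$-independent over $\mathbb{F}_p$; hence the only thing left to establish is the generation statement $A = A^p[\B]$. Writing $B = A^p[\B] \subseteq A$ and regarding both $A$ and $B$ as $A^p$-modules, it suffices to prove that the inclusion $B \hookrightarrow A$ becomes an isomorphism after localizing at every maximal ideal $\mathfrak{q} \subset A^p$, since an $A^p$-module vanishes precisely when all of its localizations at maximal ideals do.

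The bridge between localizations of $A^p$ and the local rings of $A$ comes from the Frobenius structure of the domain $A$. I would fix a maximal ideal $\mathfrak{q} \subset A^p$, set $T = A^p \setminus \mathfrak{q}$, and introduce
\[
	\mathfrak{p} = \{ a \in A \mid a^p \in \mathfrak{q} \} .
\]
Using the identity $(a+b)^p = a^p + b^p$ valid in characteristic $p$, one checks that $\mathfrak{p}$ is a prime of $A$, and since $\mathfrak{q}$ is radical in $A^p$ one has $s \in \mathfrak{q} \iff s^p \in \mathfrak{q}$ for $s \in A^p$, whence $\mathfrak{p} \cap A^p = \mathfrak{q}$. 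The key point is that $T^{-1}A = A_{\mathfrak{p}}$: indeed $T \subseteq A \setminus \mathfrak{p}$, and for any $a \in A \setminus \mathfrak{p}$ one has $a^p \in T$, so $a^{-1} = a^{p-1}/a^p$ already lies in $T^{-1}A$; thus inverting $T$ inverts all of $A \setminus \mathfrak{p}$, giving the claimed equality by the universal property of localization.

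With this identification the local theorem applies. Since $A$ is a regular domain, $A_{\mathfrak{p}}$ is a regular local ring, and since $\Kahler_{A_{\mathfrak{p}}/\mathbb{F}_p} = (\Kahler_{A/\mathbb{F}_p})_{\mathfrak{p}}$ is free over $A_{\mathfrak{p}}$ on the images of $\{ d_{A/\mathbb{F}_p}(b) \mid b \in \B\}$, the set $\B$ is an absolute differential basis of $A_{\mathfrak{p}}$; Theorem~\ref{thm:KimuraNiitsuma} then yields that $\B$ is an absolute $p$-basis of $A_{\mathfrak{p}}$, so in particular $A_{\mathfrak{p}} = (A_{\mathfrak{p}})^p[\B]$. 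To conclude $T^{-1}B = A_{\mathfrak{p}}$, I would note $T^{-1}B \subseteq T^{-1}A = A_{\mathfrak{p}}$, and for the reverse inclusion observe that $(A_{\mathfrak{p}})^p \subseteq T^{-1}B$ — every $x = a/s'$ in $A_{\mathfrak{p}}$ gives $x^p = a^p/(s')^p$ with $a^p \in A^p \subseteq B$ and $(s')^p \in T$ by the very definition of $\mathfrak{p}$ — while $\B \subseteq T^{-1}B$ trivially, so $A_{\mathfrak{p}} = (A_{\mathfrak{p}})^p[\B] \subseteq T^{-1}B$. This proves $B_{\mathfrak{q}} = A_{\mathfrak{q}}$ for every maximal $\mathfrak{q} \subset A^p$, hence $A = A^p[\B]$, which together with $p$-independence shows $\B$ is an absolute $p$-basis. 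The main obstacle, and the only genuinely non-formal step, is the identification $T^{-1}A = A_{\mathfrak{p}}$: this is exactly what lets us pass from the regular local rings $A_{\mathfrak{p}}$ governed by Kimura–Niitsuma to the $A^p$-module localizations required by the vanishing argument.
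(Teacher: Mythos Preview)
Your proof is correct and follows essentially the same approach as the paper's: reduce to $p$-independence via Lemma~\ref{lm:diff-basis-implies-p-indep}, then check $A^p[\B]=A$ by localizing as $A^p$-modules, identifying the localization $T^{-1}A$ with a local ring $A_{\mathfrak p}$ of $A$ via the Frobenius correspondence of primes, and invoking Theorem~\ref{thm:KimuraNiitsuma} there. The only cosmetic differences are that the paper localizes at all primes of $A^p$ rather than just maximal ones, and phrases the key identification as $(A^p\setminus\q)=(A\setminus Q)^p$ instead of your explicit $a^{-1}=a^{p-1}/a^p$ computation.
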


\begin{proof}
The set $\B$ is $p$-independent by Lemma~\ref{lm:diff-basis-implies-p-indep}. Thus we just need to check that $A^p[\B] = A$. To this end, let us regard the algebras $A$ and $A^p[\B]$ as modules over $A^p$. Then, in order to check that $A^p[\B] = A$, it suffices to see that
\begin{equation}  \label{eq:crl-KN:equality}
	A^p[\B] \otimes_{A^p} (A^p)_{\q}
	= A \otimes_{A^p} (A^p)_{\q}
\end{equation}%
for every prime ideal $\q \subset A^p$ (cf. \cite[Proposition~3.9]{AtiyahMacdonaldCommAlg}).

Fix a prime ideal $\q \subset A^p$. Note that the primes of $A$ are in natural correspondence with those of $A^p$. Hence $\q$ must be of the form $\q = Q \cap A^p$ for some prime ideal $Q \subset A$. In order to prove \eqref{eq:crl-KN:equality}, observe that $(A^p \setminus \q) = (A \setminus Q)^p$. Thus one readily checks that $A \otimes_{A^p} (A^p)_{\q} = A_Q$ and $(A^p)_\q = (A_Q)^p$. On the other hand, note that $\B$ is also a differential basis of $A_Q$, since $\Kahler_{A_Q} = \Kahler_A \otimes_A A_Q$ (see \cite[Exercise~25.4]{MatsumuraRingTheory}). %
Hence $(A_Q)^p[\B] = A_Q$ by Theorem~\ref{thm:KimuraNiitsuma}. In this way,
\[
	A^p[\B] \otimes_{A^p} (A^p)_{\q}
	= (A^p)_{\q}[\B]
	= (A_Q)^p[\B]
	= A_Q
	= A \otimes_{A^p} (A^p)_{\q} ,
\]%
which proves \eqref{eq:crl-KN:equality}. 

Since the previous argument works for any $\q \subset A_p$, we conclude that $A^p[\B] = A$, and hence $\B$ is an absolute $p$-basis of $A$.
\end{proof}

\begin{lemma}%
\label{lm:2nd-exact-seq-local}
Let $(R,\M,K)$ be a noetherian local ring containing a field. Then there is a natural short exact sequence
\[ \xymatrix {
	0 \ar[r] &
	\M / \M^2 \ar[r]^-{\overline{d_R}} &
	\Kahler_R \otimes K \ar[r] &
	\Kahler_K \ar[r] &
	0.
} \]%
\end{lemma}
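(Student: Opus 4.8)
The plan is to recognize the asserted sequence as the second fundamental exact sequence of K\"ahler differentials attached to the surjection $R \twoheadrightarrow R/\M = K$, and to observe that the only non-formal assertion is left exactness, i.e. injectivity of $\overline{d_R}$. First I would apply the second fundamental exact sequence \cite[Theorem~25.2]{MatsumuraRingTheory} to $\mathbb{F}_p \to R \to K$ with conormal module $\M/\M^2$; this yields the exact sequence
\[
	\M/\M^2 \xrightarrow{\ \overline{d_R}\ } \Kahler_R \otimes_R K \longrightarrow \Kahler_K \longrightarrow 0,
\]
where $\overline{d_R}(\bar x) = d_R(x) \otimes 1$. Thus exactness at $\Kahler_R \otimes_R K$ and at $\Kahler_K$, together with surjectivity on the right, comes for free, and it remains only to prove that $\overline{d_R}$ is injective.

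To establish injectivity I would dualize. Since $R$ is noetherian, $\M/\M^2$ is a finite-dimensional $K$-vector space, so $\overline{d_R}$ is injective if and only if the dual $K$-linear map $\Hom_K(\Kahler_R \otimes_R K, K) \to \Hom_K(\M/\M^2, K)$ is surjective. Using the identifications $\Hom_K(\Kahler_R \otimes_R K, K) \cong \Hom_R(\Kahler_R, K) \cong \Der(R,K)$, where $\Der(R,K)$ denotes derivations over the prime field into $K$ viewed as an $R$-module via $R \to K$, the dual map sends a derivation $D$ to the functional $\bar x \mapsto D(x)$; this is well defined because $D(\M^2) \subseteq \M \cdot K = 0$. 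Hence it suffices to realize an arbitrary $\ell \in \Hom_K(\M/\M^2, K)$ as $\bar x \mapsto D(x)$ for some derivation $D : R \to K$.

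This is the step that uses the hypothesis that $R$ contains a field. Since $\M/\M^2 \cong \hat\M/\hat\M^2$ and every derivation $\hat R \to K$ restricts along the injection $R \hookrightarrow \hat R$ to a derivation inducing the same functional on $\M/\M^2$, it is enough to build the derivation on the completion $\hat R$. By Cohen's structure theorem in the equicharacteristic case, $\hat R$ admits a coefficient field $k \xrightarrow{\sim} K$ and a presentation $\hat R = S/I$ with $S = k[[z_1,\dots,z_n]]$, where the $z_i$ are chosen to map to a $k$-basis of $\hat\M/\hat\M^2$; this choice forces $I \subseteq \M_S^2$. Given $\ell$, I would define the $k$-linear map $\partial : S \to k$ by $\partial(f) = \sum_i \ell(\bar z_i)\, c_i$, where $c_i \in k$ is the coefficient of $z_i$ in the power-series expansion of $f$. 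A direct inspection of constant and linear terms shows that $\partial$ is a derivation relative to the constant-term homomorphism $S \to k$. Because the elements of $I \subseteq \M_S^2$ have neither constant nor linear part, $\partial$ annihilates $I$ and descends to a derivation $\hat D : \hat R \to K$ with $\hat D$ inducing $\ell$ on $\hat\M/\hat\M^2$; restricting $\hat D$ to $R$ then provides the required $D$, completing the proof of injectivity.

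The main obstacle is precisely this injectivity (left exactness): it is the one place where equicharacteristic is genuinely needed, as it rests on the existence of a coefficient field in $\hat R$, and the sequence can fail to be left exact in mixed characteristic. By contrast, the verifications that $\partial$ is a derivation, that it kills $I$, and that restriction to $R$ preserves the induced functional are all routine.
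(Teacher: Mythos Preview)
Your proof is correct. The paper, by contrast, dispatches the lemma in a single line: since the prime field is perfect (or of characteristic zero), $K$ is automatically separable over it, and then \cite[Theorem~25.2]{MatsumuraRingTheory}---read together with the standard fact that a $0$-smooth (equivalently, separable) quotient makes the conormal sequence left exact---yields the short exact sequence immediately.

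Your route is genuinely different. Rather than invoking the separability/formal-smoothness criterion for left exactness, you dualize and manufacture the required derivation by hand, passing to the completion and using Cohen's structure theorem to pick a coefficient field and an explicit presentation $\widehat{R} = k[[z_1,\dots,z_n]]/I$ with $I \subset \M_S^2$. This is more labor but also more transparent: it makes visible that the real input is the existence of a coefficient field in $\widehat{R}$, which is precisely the content of formal smoothness of $K$ over the prime field in this situation. The paper's approach is shorter and stays at the level of general K\"ahler-differential machinery; yours trades that black box for Cohen's theorem plus an elementary linear-coefficient computation. One minor remark: you phrase everything over $\mathbb{F}_p$, whereas the lemma is stated for any noetherian local ring containing a field; the identical argument goes through in characteristic zero with $\mathbb{Q}$ in place of $\mathbb{F}_p$.
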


\begin{proof}
Since $K$ is separable over its prime field, the result follows from \cite[Theorem~25.2]{MatsumuraRingTheory}.
\end{proof}

\begin{proposition}  \label{prop:rsp}
Let $(R,\M,K)$ be a regular local ring that has an absolute $p$-basis, say $\B^*$. Then, for any regular system of parameters of $R$, say $z_1, \dotsc, z_d$, there exists a subset $\B_0 \subset \B^*$ so that $\B_0 \cup \{z_1, \dotsc, z_d\}$ is an absolute $p$-basis of $R$.
\end{proposition}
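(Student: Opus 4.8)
The plan is to combine the Kimura--Niitsuma theorem (now available in the form of Corollary~\ref{crl:KimuraNiitsuma:open}, or directly Theorem~\ref{thm:KimuraNiitsuma} since $R$ is local) with the short exact sequence of Lemma~\ref{lm:2nd-exact-seq-local}, using the differential-basis characterization as the bridge. Since $R$ has an absolute $p$-basis $\B^*$, Proposition~\ref{prop:differential-basis} tells us that $\{d_R(b) \mid b \in \B^*\}$ is a free $A$-module basis of $\Kahler_R$. The key observation I would exploit is that by Theorem~\ref{thm:KimuraNiitsuma} it suffices to produce an \emph{absolute differential basis} of the desired shape $\B_0 \cup \{z_1,\dotsc,z_d\}$; the $p$-basis property then comes for free. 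So the whole problem reduces to a linear-algebra statement about the free module $\Kahler_R$ and its behavior modulo $\M$.

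First I would pass to the residue field via Lemma~\ref{lm:2nd-exact-seq-local}, which gives the short exact sequence
\[
	0 \to \M/\M^2 \xrightarrow{\overline{d_R}} \Kahler_R \otimes_R K \to \Kahler_K \to 0 .
\]
Because $z_1,\dotsc,z_d$ form a regular system of parameters, their classes $\overline{z_i}$ form a $K$-basis of $\M/\M^2$, so the elements $d_R(z_1),\dotsc,d_R(z_d)$ map to a basis of the image of $\overline{d_R}$ inside $\Kahler_R \otimes_R K$. On the other hand, the images of $\{d_R(b) \mid b \in \B^*\}$ form a $K$-basis of $\Kahler_R \otimes_R K$ (being the reduction of a free basis of the finitely generated free module $\Kahler_R$). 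The plan is then to select a subset $\B_0 \subset \B^*$ whose images $\{d_R(b) \otimes 1 \mid b \in \B_0\}$ project to a $K$-basis of $\Kahler_K$ under the surjection onto $\Kahler_K$; by exactness, $\{d_R(b)\otimes 1 \mid b \in \B_0\} \cup \{d_R(z_i)\otimes 1\}_i$ is then a $K$-basis of $\Kahler_R \otimes_R K$.

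Having arranged a basis of the \emph{reduction} $\Kahler_R \otimes_R K$, I would lift it back to $\Kahler_R$ itself. This is where I expect the one genuine technical point to lie: $\Kahler_R$ need not be finitely generated when $[k:k^p]=\infty$, so the usual Nakayama argument for finitely generated modules does not apply verbatim. I would handle this by working with the free presentation $\Kahler_R = \bigoplus_{b\in\B^*} R\, d_R(b)$ directly. Writing each $d_R(z_i)$ in this basis, one sees that finitely many coordinates are involved; the change-of-basis expressing $\{d_R(b)\}_{b\in\B_0}\cup\{d_R(z_i)\}_i$ in terms of $\{d_R(b)\}_{b\in\B^*}$ is invertible modulo $\M$ on the relevant finite block and is the identity elsewhere, so its matrix is invertible over $R$ (an element of $R$ that is a unit modulo $\M$ is a unit, since $R$ is local). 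Hence $\{d_R(b) \mid b\in\B_0\} \cup \{d_R(z_i)\}_i$ is itself a free $R$-basis of $\Kahler_R$, i.e. $\B_0 \cup \{z_1,\dotsc,z_d\}$ is an absolute differential basis of $R$.

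Finally I would invoke Theorem~\ref{thm:KimuraNiitsuma}: since $R$ is a regular local ring and $\B_0\cup\{z_1,\dotsc,z_d\}$ is an absolute differential basis, it is an absolute $p$-basis of $R$, which is the desired conclusion. The main obstacle, to reiterate, is the lifting step in the possibly non-finitely-generated module $\Kahler_R$; the cleanest way around it is to keep everything inside the explicit free basis indexed by $\B^*$ so that all the linear algebra takes place in a finite-dimensional subspace and extends trivially by the identity on the complementary free summand.
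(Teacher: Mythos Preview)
Your proposal is correct and follows essentially the same route as the paper: reduce via Theorem~\ref{thm:KimuraNiitsuma} to producing an absolute differential basis, use the exact sequence of Lemma~\ref{lm:2nd-exact-seq-local} to see that the images of the $d_R(z_i)$ together with those of a suitable $\B_0\subset\B^*$ form a $K$-basis of $\Kahler_R\otimes K$, and then lift this to an $R$-basis by observing that the change-of-basis matrix is block-triangular with a finite block whose determinant is a unit modulo $\M$ and hence a unit in $R$. The only cosmetic differences are that the paper selects $\B_0$ by removing $d$ elements $b_1,\dotsc,b_d$ chosen so that the $d\times d$ Jacobian $\bigl(\partial z_i/\partial b_j\bigr)$ has full rank modulo $\M$, whereas you select $\B_0$ by lifting a basis of $\Kahler_K$; and your parenthetical ``finitely generated free module $\Kahler_R$'' is a slip (as you yourself note a few lines later), but the reduction of a free basis is a basis of the tensor product regardless of rank, so nothing is affected.
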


\begin{proof}
Note that, given a regular system of parameters of $R$, say $z_1, \dotsc, z_d$, the set $\{ \overline{z}_1, \dotsc, \overline{z}_d\}$ is a basis of $\M / \M^2$. In addition, by Proposition~\ref{prop:differential-basis}, we know that $\Kahler_R$ is a free $R$-module with $\{d_R(b) \mid b \in \B^*\}$ as a basis, and
\[
	d_R (z_i)
	= \sum_{b \in \B^*}
		\frac{\partial z_i}{\partial b} \cdot d_R(b)
\]%
for $i = 1, \dotsc, d$. Since the induced map $\overline{d_R} : \M / \M^2 \to \Kahler_R \otimes K$ is injective (Lemma~\ref{lm:2nd-exact-seq-local}), there should be elements $b_1, \dotsc, b_d \in \B^*$ such that the Jacobian matrix
\[
	J = \begin{pmatrix}
		\frac{\partial z_1}{\partial b_1} & \hdots &
			\frac{\partial z_1}{\partial b_d} \\
		\vdots & \ddots & \vdots\\
		\frac{\partial z_d}{\partial b_1} & \hdots &
			\frac{\partial z_d}{\partial b_d} \end{pmatrix}
	\in \Mat_{d \times d}(R)
\]%
has rank $d$ modulo $\M$ (i.e., as a matrix over $K = R/\M$). Set $\B_0 = \B^* \setminus \{b_1, \dotsc, b_d\}$. Then we claim that the set $\B = \B_0 \cup \{z_1, \dotsc, z_d\}$ is an absolute $p$-basis of $R$.

In order to prove the claim, we shall show that $\{d_R(b) \mid b \in \B\}$ is a basis of $\Kahler_R$ (recall that, as $\B^*$ is differential basis of $R$, we already know that $\Kahler_R$ is a free $R$-module). By Theorem~\ref{thm:KimuraNiitsuma}, this proves that $\B$ is an absolute $p$-basis of $R$.

Note that the matrix $J$ has non-zero determinant modulo $\M$, as it has rank $d$ modulo $\M$. Hence the determinant of $J$ is a unit in $R$, and therefore $J$ is invertible over $R$. Since
\[
	\{d_R(b) \mid b \in \B^* \}
	= \{d_R(b) \mid b \in \B_0\} \cup \{d_R(b_1), \dotsc, d_R(b_d)\}
\]%
is a basis of $\Kahler_R$, and $J$ is invertible, one readily checks that
\[
	\{d_R(b) \mid b \in \B \}
	= \{d_R(b) \mid b \in \B_0\} \cup \{d_R(z_1), \dotsc, d_R(z_d)\}
\]%
is also a basis of $\Kahler_R$. Hence $\B = \B_0 \cup \{z_1, \dotsc, z_d\}$ is an absolute $p$-basis of $R$.
\end{proof}

\begin{remark}
Fix a regular local ring $(R,\M,K)$, a regular system of parameters $z_1, \dotsc, z_d$, and an absolute $p$-basis of $R$ of the form $\B_0 \cup \{z_1, \dotsc, z_d\}$ as in Proposition~\ref{prop:rsp}. Note that, attending to the short exact sequence of Lemma~\ref{lm:2nd-exact-seq-local}, $\left \{d_K \bigl(\overline{b}\bigr) \mid b \in \B_0 \right \}$ should be basis of $\Kahler_K$, and hence the image of $\B_0$ in $K$ is an absolute $p$-basis of $K$. %
By \cite[Theorem~26.8]{MatsumuraRingTheory}, this implies that the image of $\B_0$ in $K$ is algebraically independent over the prime field $\mathbb{F}_p$. As a consequence, $\mathbb{F}_p[\B_0]$ can be regarded as polynomial ring contained in $R$ and, since $\mathbb{F}_p[\B_0] \cap \M = \{ 0\}$, we may write $\mathbb{F}_p(\B_0) \subset R$. Note also that, since the image of $\B_0$ in $K$ is an absolute $p$-basis of $K$, the field $K$ is formally \'etale over $\mathbb{F}_p(\B_0)$ (cf. \cite[Theorem~26.7]{MatsumuraRingTheory}). %
Hence $\mathbb{F}_p(\B_0)$ can be extended to a unique coefficient field of $\widehat{R}$ (the completion of $R$ with respect to $\M$). This proves that, under the hypotheses of Proposition~\ref{prop:rsp}, the subfield $\mathbb{F}_p(\B_0) \subset R$ is a quasi-coefficient field of $R$.
\end{remark}

\begin{remark}
Let $\B_0 \cup \{z_1, \dotsc, z_d\}$ be an absolute $p$-basis of a regular local ring $(R,\M,K)$ as in Proposition~\ref{prop:rsp}. In the previous remark we showed that $\mathbb{F}_p(\B_0) \subset R$ is a quasi-coefficient field of $R$. Therefore $\mathbb{F}_p(\B_0)$ can be extended to a unique coefficient field of $\widehat{R}$, say $K_0 \subset \widehat{R}$, for which $\B_0$ is an absolute $p$-basis. In addition,
Cohen's structure theorem %
says that $\widehat{R} \simeq K_0 [[z_1, \dotsc, z_d]$. Nevertheless, it should be noted that, in general, $\B_0 \cup \{z_1, \dotsc, z_d\}$ is not a $p$-basis of $\widehat{R}$. Indeed, it can be proved that $\B_0 \cup \{z_1, \dotsc, z_d\}$ is an absolute $p$-basis of $\widehat{R}$ if and only if $\B_0$ is finite (cf. \cite[Example~3.4]{KimuraNiitsuma80}).
\end{remark}

\begin{lemma} \label{lm:p-basis:rsp-open}
Let $A$ be a regular ring of characteristic $p > 0$ that has an absolute $p$-basis, say $\B$. Fix a prime ideal $P \subset A$. Then, for any regular system of parameters $z_1, \dotsc, z_d$ of $A_P$, there exists a subset $\B_0 \subset \B$ and an element $f \in A \setminus P$ so that $z_1, \dotsc, z_d \in A_f$, and $\B_0 \cup \{z_1, \dotsc, z_d\}$ is an absolute $p$-basis of $A_f$.
\end{lemma}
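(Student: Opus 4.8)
The plan is to transfer the local statement of Proposition~\ref{prop:rsp} from the regular local ring $A_P$ to a suitable localization $A_f$ by a spreading-out argument, and then to invoke that an absolute differential basis of a regular domain is automatically an absolute $p$-basis (Corollary~\ref{crl:KimuraNiitsuma:open}).

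First, since $A$ is regular it is reduced, so the multiplicative set $A \setminus P$ contains no nonzero nilpotents; hence by Lemma~\ref{lm:p-basis:localization} the image of $\B$ in $A_P$ is an absolute $p$-basis of the regular local ring $A_P$. Applying Proposition~\ref{prop:rsp} to $A_P$ with the regular system of parameters $z_1, \dotsc, z_d$ produces elements $b_1, \dotsc, b_d \in \B$ such that, setting $\B_0 = \B \setminus \{b_1, \dotsc, b_d\}$, the set $\B_0 \cup \{z_1, \dotsc, z_d\}$ is an absolute $p$-basis of $A_P$, and moreover the Jacobian matrix $J = \left( \partial z_i / \partial b_j \right)_{i,j} \in \Mat_{d \times d}(A_P)$ has unit determinant, where $\partial / \partial b$ is the derivative of Corollary~\ref{crl:partial-b}.

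Next I spread this out. Writing $z_i = a_i / s_i$ with $a_i \in A$ and $s_i \in A \setminus P$ and putting $g = s_1 \dotsm s_d$, we have $z_1, \dotsc, z_d \in A_g$; the derivations $\partial/\partial b$ extend uniquely to $A_g$ (compatibly with $A_g \to A_P$), so $J \in \Mat_{d \times d}(A_g)$ and $\det J \in A_g$. Since $g$ is already a unit in $A_P = (A_g)_{P A_g}$ and $\det J$ is a unit there, writing $\det J = N / g^m$ with $N \in A$ forces $N \notin P$; taking $f = g \cdot N \in A \setminus P$ we get $z_1, \dotsc, z_d \in A_f$ together with $\det J$ a unit in $A_f$. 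This is the step I expect to be the main obstacle: passing from invertibility of $J$ at the single point $P$ to invertibility over the whole open set $\Spec(A_f)$.

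Finally I check that $\B_0 \cup \{z_1, \dotsc, z_d\}$ is an absolute differential basis of $A_f$ and conclude. By Proposition~\ref{prop:differential-basis}, $\Kahler_A$ is free over $A$ with basis $\{ db \mid b \in \B\}$, so by localization (\cite[Exercise~25.4]{MatsumuraRingTheory}) $\Kahler_{A_f}$ is free over $A_f$ with the same set as basis, which splits as $\{ db \mid b \in \B_0\} \cup \{ db_1, \dotsc, db_d\}$. Writing $d z_i = \sum_{b \in \B} \frac{\partial z_i}{\partial b}\, db$ (a finite sum), the transition that fixes $\{ db \mid b \in \B_0\}$ and replaces $db_1, \dotsc, db_d$ by $dz_1, \dotsc, dz_d$ is given by a block-triangular matrix whose diagonal blocks are the identity and $J$; as $\det J$ is a unit in $A_f$ this matrix is invertible, so $\{ db \mid b \in \B_0\} \cup \{ dz_1, \dotsc, dz_d\}$ is again a basis of $\Kahler_{A_f}$, i.e.\ $\B_0 \cup \{z_1, \dotsc, z_d\}$ is an absolute differential basis of $A_f$. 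After replacing $f$ by a suitable multiple so that $A_f$ is a domain --- possible because $A$ is reduced and $A_P$ is a domain, so $P$ contains a unique minimal prime of $A$, and inverting an element lying in every other minimal prime but not in $P$ yields a reduced ring with a single minimal prime --- the ring $A_f$ is a regular domain, and Corollary~\ref{crl:KimuraNiitsuma:open} upgrades this absolute differential basis to an absolute $p$-basis of $A_f$, completing the proof.
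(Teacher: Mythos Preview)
Your proof is correct and follows essentially the same route as the paper: find $b_1,\dotsc,b_d\in\B$ via the Jacobian argument of Proposition~\ref{prop:rsp}, spread out to an $A_f$ in which $\det J$ is a unit, verify that $\B_0\cup\{z_1,\dotsc,z_d\}$ is an absolute differential basis of $A_f$, and conclude via Corollary~\ref{crl:KimuraNiitsuma:open}. You are in fact more careful than the paper on one point: the paper applies Corollary~\ref{crl:KimuraNiitsuma:open} to $A_f$ without verifying that $A_f$ is a domain, whereas you arrange this explicitly by further shrinking to kill the other minimal primes.
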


\begin{proof}
Arguing as in the proof of Proposition~\ref{prop:rsp}, one can find a finite collection of elements $b_1, \dotsc, b_d \in A_P$ such that the Jacobian matrix
\[
	J = \begin{pmatrix}
		\frac{\partial z_1}{\partial b_1} & \hdots &
			\frac{\partial z_1}{\partial b_d} \\
		\vdots & \ddots & \vdots\\
		\frac{\partial z_d}{\partial b_1} & \hdots &
			\frac{\partial z_d}{\partial b_d} \end{pmatrix}
	\in \Mat_{d \times d}(A_P)
\]%
is invertible over $A_P$. That is, so that $\det(J)$ is a unit in $A_P$. Then choose $f$ so that $z_1, \dotsc, z_d \in A_f$ and $\det(J)$ is invertible in $A_f$, and set $\B_0 = \B \setminus \{b_1, \dotsc, b_d\}$. We claim that $\B_0 \cup \{z_1, \dotsc, z_d\}$ is an absolute $p$-basis of $A_f$.

In virtue of Corollary~\ref{crl:KimuraNiitsuma:open}, in order to prove the claim we just need to show that $\B_0 \cup \{z_1, \dotsc, z_d\}$ is an absolute differential basis of $A_f$. That is, that $\Kahler_{A_f} = \Kahler_A \otimes A_f$ is a free module with $\{d_A(b) \mid b \in \B_0 \} \cup \{d_A(z_1), \dotsc d_A(z_d) \}$ as a basis.

Since $\Kahler_{A_f} = \Kahler_A \otimes A_f$, one has that $\Kahler_{A_f}$ is free. On the other hand, observe that $J$ can also be regarded as a matrix over $A_f$. As $\det(J)$ is invertible in $A_f$, it follows that $J$ is also invertible as a matrix over $A_f$. Thus one can check that $\{d_A(b) \mid b \in \B_0 \} \cup \{d_A(z_1), \dotsc d_A(z_d) \}$ is a basis of $\Kahler_{A_f}$.
\end{proof}

\begin{lemma}  \label{lm:p-basis:quotient-open}
Let $A$ be a regular ring containing $\mathbb{F}_p$ that admits an absolute $p$-basis, and let $C = A / J$ be a regular quotient of $A$. Then, for each prime ideal $\p \subset C$, there exists an element $g \in C \setminus \q$ so that $C_g$ admits an absolute $p$-basis.
\end{lemma}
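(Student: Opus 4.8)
The plan is to reduce, after a suitable localization, to a situation where $J$ is generated by part of a $p$-basis, and then to read off a $p$-basis of the quotient from the conormal exact sequence. First I would set up notation: write $P \subset A$ for the preimage of $\p$, so that $J \subset P$ and both $A_P$ and $C_\p = A_P / J A_P$ are regular local rings. Since a quotient of a regular local ring is regular precisely when the ideal is generated by part of a regular system of parameters, I can choose $y_1, \dotsc, y_r \in J$ whose images generate $J A_P$ and form part of a regular system of parameters of $A_P$ (take $y_i$ to be numerators of generators of $JA_P$; multiplying by a unit of $A_P$ changes neither the ideal generated nor the property of being part of a regular system of parameters). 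I would then extend to a full regular system of parameters $y_1, \dotsc, y_r, z_1, \dotsc, z_s$ of $A_P$, so that $s = \dim C_\p$ and the images $\overline{z}_1, \dotsc, \overline{z}_s$ form a regular system of parameters of $C_\p$.

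Next I would feed this regular system of parameters into Lemma~\ref{lm:p-basis:rsp-open}, obtaining a subset $\B_0 \subset \B$ and an element $f_1 \in A \setminus P$ such that $\B_0 \cup \{y_1, \dotsc, y_r, z_1, \dotsc, z_s\}$ is an absolute $p$-basis of $A_{f_1}$. Because the finitely generated $A$-module $J / \langle y_1, \dotsc, y_r \rangle$ vanishes after localizing at $P$, there is $f_2 \in A \setminus P$ annihilating it, so that $J A_{f_2} = \langle y_1, \dotsc, y_r \rangle A_{f_2}$. Finally, since $C$ is regular, hence normal, $\Spec C$ is a finite disjoint union of integral schemes, and I can pick $f_3 \in A$ whose image avoids $\p$ and makes $C_{f_3}$ a domain. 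Putting $f = f_1 f_2 f_3$ and letting $g$ be the image of $f$ in $C$, Lemma~\ref{lm:p-basis:localization} guarantees that $\B_0 \cup \{y_1, \dotsc, y_r, z_1, \dotsc, z_s\}$ remains an absolute $p$-basis of $A_f$, while $C_g = A_f / \langle y_1, \dotsc, y_r \rangle A_f$ is a regular domain with $g \notin \p$.

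It then remains to show that the images of $\B_0 \cup \{z_1, \dotsc, z_s\}$ form an absolute $p$-basis of $C_g$, and here I would argue through differential bases and close with Corollary~\ref{crl:KimuraNiitsuma:open}. By Proposition~\ref{prop:differential-basis}, $\Kahler_{A_f}$ is free over $A_f$ on $\{d_{A_f}(b) \mid b \in \B_0\} \cup \{d_{A_f}(y_i)\}_i \cup \{d_{A_f}(z_j)\}_j$, and tensoring with $C_g$ keeps it free on the images of these elements. The second fundamental exact sequence \cite[Theorem~25.2]{MatsumuraRingTheory} for $C_g = A_f / \langle y_1, \dotsc, y_r \rangle A_f$ reads
\[
	\langle y_1, \dotsc, y_r \rangle / \langle y_1, \dotsc, y_r \rangle^2
	\xrightarrow{\ \overline{d}\ }
	\Kahler_{A_f} \otimes_{A_f} C_g
	\longrightarrow \Kahler_{C_g} \longrightarrow 0,
\]
and since the left-hand module is generated over $C_g$ by the classes of $y_1, \dotsc, y_r$, the image of $\overline{d}$ is exactly the $C_g$-span of $d_{A_f}(y_1) \otimes 1, \dotsc, d_{A_f}(y_r) \otimes 1$. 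As these lie in a free basis, the quotient $\Kahler_{C_g}$ is free on the images of $\{d_{A_f}(b) \mid b \in \B_0\} \cup \{d_{A_f}(z_j)\}_j$; that is, the images of $\B_0 \cup \{z_1, \dotsc, z_s\}$ form an absolute differential basis of $C_g$. Since $C_g$ is a regular domain, Corollary~\ref{crl:KimuraNiitsuma:open} upgrades this differential basis to an absolute $p$-basis, proving the lemma.

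The step I expect to be the main obstacle is the localization bookkeeping of the first two paragraphs: one must select the generators $y_i$ inside $J$ itself (not merely in $J A_P$) while preserving the property of forming part of a regular system of parameters, and then shrink $\Spec A$ around $P$ just enough that, \emph{simultaneously}, $J$ becomes exactly $\langle y_1, \dotsc, y_r\rangle$, the set $\B_0 \cup \{y_\bullet, z_\bullet\}$ is a genuine $p$-basis, and the quotient is a domain. By contrast, the concluding differential computation is essentially formal once this setup is in place, since it only exploits that $J$ is generated by members of the $p$-basis whose differentials sit inside a free basis of $\Kahler_{A_f}$.
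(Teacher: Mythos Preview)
Your proposal is correct and follows essentially the same route as the paper: localize so that $J$ is generated by part of a regular system of parameters that sits inside an absolute $p$-basis (via Lemma~\ref{lm:p-basis:rsp-open}), then use the conormal exact sequence to read off a differential basis of the quotient and upgrade it to a $p$-basis with Corollary~\ref{crl:KimuraNiitsuma:open}. You are in fact slightly more careful than the paper in one place: you explicitly shrink further (your $f_3$) to make $C_g$ a domain before invoking Corollary~\ref{crl:KimuraNiitsuma:open}, whereas the paper's proof applies that corollary to $C_g$ without checking the domain hypothesis.
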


\begin{proof}
Let $P$ denote the preimage of $\p$ in $A$. Since $C$ is regular, one can find a regular system of parameters of $A_P$, say $z_1, \dotsc, z_d$, so that $JA_P = \langle z_1, \dotsc, z_r \rangle$, with $r = \height(JA_P)$. In this setting, according to Lemma~\ref{lm:p-basis:rsp-open}, it is possible to choose an element $f \in A \setminus P$ in such a way that $z_1, \dotsc, z_d \in A_f$, $JA_f = \langle z_1, \dotsc, z_r \rangle$, and $A_f$ has an absolute $p$-basis of the form $\B_0 \cup \{z_1, \dotsc, z_d\}$. Choose $g$ as the image of $f$ in $C$. Then we claim that the image of $\B_0 \cup \{z_{r+1}, \dotsc, z_d\}$ is an absolute $p$-basis of $C_g = A _f / J A_f$.

In order to prove the claim, we may assume without loss of generality that $A = A_f$, $J = J A_f$, and $C = C_g$. Recall that, by Proposition~\ref{prop:differential-basis}, $\Kahler_A$ is a free $A$-module with the set $d_A(\B_0) \cup \{d_A(z_1), \dotsc, d_A(z_d)\}$ as a basis. In virtue of
\cite[Theorem~25.2]{MatsumuraRingTheory},
there is a natural exact sequence
\[ \xymatrix {
	J / J^2 \ar[r]^-{\overline{d_A}} &
	\Kahler_{A} \otimes C \ar[r]^-{\upsilon} &
	\Kahler_{C} \ar[r] &
	0.
} \]%
Since $J$ is generated by $z_1, \dotsc, z_r$, the kernel of  $\upsilon$ is generated by the images of $d_A(z_1), \dotsc, d_A(z_r)$ in $\Kahler_A \otimes C$. Thus we see that $\Kahler_C$ is a free $C$-module with the image of the set $d_A(\B_0) \cup \{ d_A(z_{r+1}), \dotsc, d_A(z_d) \}$ as a basis. Hence the image of $\B_0 \cup \{z_{r+1}, \dotsc, z_d\}$ in $C_g$ is an absolute $p$-basis of $C_g$ by Corollary~\ref{crl:KimuraNiitsuma:open}
\end{proof}

\begin{remark}
As a consequence of Lemma~\ref{lm:p-basis:quotient-open} one has that, if $R$ is a regular local ring of characteristic $p > 0$ that admits an absolute $p$-basis and $R / J$ is a regular quotient of $R$, then $R/J$ also admits an absolute $p$-basis.
\end{remark}

\begin{proposition} \label{prop:p-basis:finite-type-extension}
Let $Z_0$ be a regular scheme of characteristic $p > 0$ that can be covered by open affine charts of the form $\Spec(A) \subset Z_0$ so that $A$ admits an absolute $p$-basis. Then any regular scheme $Z$ of finite type over $Z_0$ can be covered by open affine charts of the form $\Spec(C) \subset Z$ so that $C$ admits an absolute $p$-basis.
\end{proposition}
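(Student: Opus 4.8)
The plan is to reduce the global statement to a purely local one and then assemble two results already established: the polynomial extension lemma (Lemma~\ref{lm:p-basis:polynomial-extension}) and the quotient lemma (Lemma~\ref{lm:p-basis:quotient-open}). First I would fix an arbitrary point $\xi \in Z$ with image $\eta \in Z_0$ under the structure morphism $\pi : Z \to Z_0$, and exhibit an affine open neighborhood of $\xi$ whose coordinate ring admits an absolute $p$-basis. Since $\xi$ is arbitrary, such neighborhoods form the desired cover of $Z$.

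By hypothesis on $Z_0$, I can choose an affine chart $\Spec(A) \subset Z_0$ with $\eta \in \Spec(A)$ and $A$ admitting an absolute $p$-basis. The restriction $\pi^{-1}(\Spec(A)) \to \Spec(A)$ is again of finite type, so the open set $\pi^{-1}(\Spec(A)) \subset Z$, which contains $\xi$, is covered by affine opens $\Spec(B)$ with $B$ a finitely generated $A$-algebra. Picking one such chart containing $\xi$, I may write $B = A[x_1, \dotsc, x_n] / J$ for some integer $n$ and some ideal $J$. Set $D = A[x_1, \dotsc, x_n]$.

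The two key inputs enter now. On the one hand, Lemma~\ref{lm:p-basis:polynomial-extension} (applied with $k = \mathbb{F}_p$) shows that $D$ admits an absolute $p$-basis, obtained by adjoining the variables $x_1, \dotsc, x_n$ to an absolute $p$-basis of $A$; and since $A$ is regular, being a chart of the regular scheme $Z_0$, the polynomial ring $D$ is regular as well. On the other hand, $B = D / J$ is regular, being the coordinate ring of an affine open of the regular scheme $Z$. Thus $D$ is a regular ring with an absolute $p$-basis and $B = D / J$ is a regular quotient, so Lemma~\ref{lm:p-basis:quotient-open} applies: for the prime $\p \subset B$ corresponding to $\xi$, there exists $g \in B \setminus \p$ such that $B_g$ admits an absolute $p$-basis. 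Then $\Spec(B_g)$ is an affine open of $Z$ containing $\xi$ whose coordinate ring has an absolute $p$-basis, which completes the local step.

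The real content is carried entirely by Lemma~\ref{lm:p-basis:quotient-open}; the remainder is formal bookkeeping with the finite-type condition together with the standard fact that a polynomial ring over a regular Noetherian ring is regular. Accordingly I do not expect a genuine obstacle, but the step requiring the most care is verifying that the chart about $\xi$ can simultaneously be realized as a quotient $D / J$ of a polynomial ring over a chart $A$ of $Z_0$ that carries an absolute $p$-basis --- that is, that the finite presentation is compatible with the prescribed cover of $Z_0$. This compatibility is precisely where the finite-type hypothesis on $Z \to Z_0$ is used.
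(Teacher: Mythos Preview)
Your proof is correct and follows essentially the same approach as the paper's own proof, which simply observes that $Z$ is covered by charts of the form $\Spec(A[x_1,\dotsc,x_n]/J)$ with $\Spec(A)$ a chart of $Z_0$, and then invokes Lemma~\ref{lm:p-basis:polynomial-extension} and Lemma~\ref{lm:p-basis:quotient-open}. Your write-up spells out the pointwise reduction and the regularity of the polynomial ring more explicitly, but the underlying argument is identical.
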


\begin{proof}
Observe that $Z$ can be covered by affine charts of the form
\[
	\Spec( A[x_1, \dotsc, x_n] / J ),
\]%
where $\Spec(A)$ is an affine chart of $Z_0$, and $J$ is an ideal of the polynomial ring $A[x_1, \dotsc, x_n]$. Then the result follows from Lemma~\ref{lm:p-basis:polynomial-extension} and Lemma~\ref{lm:p-basis:quotient-open}.
\end{proof}

\begin{theorem} \label{thm:variety-admits-p-basis}
Let $Z$ be a regular variety over a \textup{(}possibly non-perfect\textup{)} field $k$ of characteristic $p > 0$. For each $\xi \in Z$, there exists an open affine neighborhood of $Z$ at $\xi$, say $\Spec(C) \subset Z$, so that $C$ admits an absolute $p$-basis.
\end{theorem}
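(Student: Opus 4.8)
The plan is to deduce the theorem directly from Proposition~\ref{prop:p-basis:finite-type-extension} by taking the base scheme to be $Z_0 = \Spec(k)$. First I would observe that a variety over $k$ is, by definition, of finite type over $k$, so $Z$ is a regular scheme of finite type over $Z_0 = \Spec(k)$. Thus the part of the hypotheses of Proposition~\ref{prop:p-basis:finite-type-extension} concerning $Z$ is met at once, and everything reduces to verifying that the base $Z_0$ satisfies the conditions required there.

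Next I would check that $Z_0 = \Spec(k)$ meets those conditions, namely that it is a regular scheme of characteristic $p > 0$ which can be covered by affine charts whose coordinate rings admit an absolute $p$-basis. Since $k$ is a field of characteristic $p$, the scheme $\Spec(k)$ consists of a single point and is trivially regular, and it is covered by the single affine chart $\Spec(k)$ itself. The only substantive point — and the heart of the argument — is that the field $k$ admits an absolute $p$-basis, that is, a $p$-basis over $\mathbb{F}_p$. This follows from the existence result for field extensions recalled earlier in the excerpt: applied to the extension $k / \mathbb{F}_p$, a maximal $p$-independent subset of $k$ over $\mathbb{F}_p$ (which exists by Zorn's lemma) is automatically a $p$-basis of $k$ over $\mathbb{F}_p$, i.e., an absolute $p$-basis of $k$.

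With both sets of hypotheses in place, Proposition~\ref{prop:p-basis:finite-type-extension} furnishes an affine cover of $Z$ by charts $\Spec(C) \subset Z$ such that each $C$ admits an absolute $p$-basis. Finally, given an arbitrary point $\xi \in Z$, I would simply select a chart $\Spec(C)$ of this cover that contains $\xi$; this is the desired open affine neighborhood of $Z$ at $\xi$ whose coordinate ring admits an absolute $p$-basis.

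I do not expect any genuine obstacle in this final step: the entire technical apparatus — the polynomial-extension Lemma~\ref{lm:p-basis:polynomial-extension}, the localization-at-a-regular-system-of-parameters Lemma~\ref{lm:p-basis:rsp-open}, and the regular-quotient Lemma~\ref{lm:p-basis:quotient-open} — has already been packaged inside the proof of Proposition~\ref{prop:p-basis:finite-type-extension}. Consequently, the only ingredient specific to this theorem is the existence of an absolute $p$-basis on the ground field $k$, which is the standard maximal-$p$-independent-set argument; everything else is the formal application of the proposition to $Z_0 = \Spec(k)$.
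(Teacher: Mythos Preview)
Your proposal is correct and follows exactly the paper's own argument: the paper's proof consists of the two sentences ``By assumption $Z$ is endowed with a finite type morphism $Z \to \Spec(k)$. Since every field admits an absolute $p$-basis, the claim follows from the previous proposition,'' which is precisely your reduction to Proposition~\ref{prop:p-basis:finite-type-extension} with $Z_0 = \Spec(k)$.
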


\begin{proof}
By assumption $Z$ is endowed with a finite type morphism $Z \to \Spec(k)$. Since every field admits an absolute $p$-basis, the claim follows from the previous proposition.
\end{proof}

\begin{remark}
It is an open problem to determine the class of schemes that admit $p$-bases or, more precisely, that can be covered by open affine charts of the form $\Spec(A)$, so that $A$ admits an absolute $p$-basis.

Note that a noetherian domain $A$ that has an absolute $p$-basis can be regarded as a free $A^p$-module (see Lemma~\ref{lm:p-basis-presentation}), and hence it is regular by a theorem of Kunz~\cite{Kunz69}.
In addition, a noetherian domain that has an absolute $p$-basis is excellent by \cite{KimuraNiitsuma79}. Thus the class of noetherian domains that admit an absolute $p$-basis is contained in the category of regular excellent domains.

On the other hand, there are examples of regular excellent domains of characteristic $p > 0$ which do not admit an absolute $p$-basis. Indeed, \cite[Example~3.8]{KimuraNiitsuma80} exhibits a complete regular local ring of dimension one which does not admit an absolute $p$-basis.
\end{remark}

\endgroup  %

\section{Jacobian criterion for regularity}
\label{sec:jacobian}

Let $A$ be a regular algebra of characteristic $p > 0$ that admits an absolute $p$-basis, and let $C = A / J$ be a quotient of $A$. In this section we give a Jacobian criterion for the regularity of $C$ at a prime $\q \in \Spec(C)$ (Proposition~\ref{prop:jacobian-criterion-p-bases}). This can be regarded as a generalization of Zariski's Jacobian criterion \cite[Theorem~11]{Zariski47}, which characterizes the regular points of an algebra that is given as the quotient of a polynomial ring (see Corollary~\ref{crl:jacobian-criterion-polynomials}).

\begin{proposition}  \label{prop:jacobian-criterion-p-bases}
Let $A$ be a regular ring containing $\mathbb{F}_p$ that admits an absolute $p$-basis, say $\B$. Let $J = \langle f_1, \dotsc, f_s \rangle$ be an ideal in $A$, and consider a prime ideal $P \subset A$ such that $J \subset P$. Set $C = A / J$ and $\p = P / J \subset C$. Then:
\begin{enumerate}[i\textup{)}]

\item There is a finite subset $\{b_1, \dotsc, b_m\} \subset \B$ so that $\frac{\partial f_1}{\partial b'} = \dotsb = \frac{\partial f_s}{\partial b'} = 0$\footnote{The notation $\frac{\partial}{\partial b'}$ was introduced in Corollary~\ref{crl:partial-b}.} for every $b' \in \B \setminus \{b_1, \dotsc, b_m\}$;

\item Assume that $\height(JA_P) = r$. Then $C_{\p}$ is regular if and only if the Jacobian matrix
\[
	\Jacobian \left (f; \tfrac{\partial}{\partial b} \right )
	:= \begin{pmatrix}
		\frac{\partial f_1}{\partial b_1} & \hdots &
			\frac{\partial f_1}{\partial b_m} \\
		\vdots & \ddots & \vdots\\
		\frac{\partial f_s}{\partial b_1} & \hdots &
			\frac{\partial f_s}{\partial b_m}
	\end{pmatrix}
\]%
has rank $r$ modulo $P$ \textup{(}that is, if and only if it contains an $r \times r$ minor whose determinant is non-zero modulo $P$\textup{)}.
\end{enumerate}
\end{proposition}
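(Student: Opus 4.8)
For part i), recall that, as observed in the corollary following Corollary~\ref{crl:partial-b}, for each single element $f \in A$ only finitely many of the partials $\frac{\partial f}{\partial b}$, $b \in \B$, are nonzero. The plan is simply to apply this to each of $f_1, \dotsc, f_s$ and take the union of the corresponding finite subsets of $\B$; this yields a finite set $\{b_1, \dotsc, b_m\}$ outside of which all the partials $\frac{\partial f_i}{\partial b'}$ vanish.

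For part ii), I would localize at $P$ and reduce the regularity of $C_\p$ to a rank condition in the cotangent space, which can then be read off from the Jacobian matrix via the exact sequence of Lemma~\ref{lm:2nd-exact-seq-local}. Set $R = A_P$, a regular local ring with maximal ideal $\M = PA_P$, residue field $K = R/\M$, and dimension $n = \dim A_P$; write $I = JA_P$, so that $C_\p = R/I$ and $\height I = r$. Since $R$ is regular, hence Cohen--Macaulay, one has $\dim C_\p = n - r$. The embedding dimension of $C_\p$ equals $\dim_K \M/(\M^2 + I) = n - \dim_K (I + \M^2)/\M^2$, so $C_\p$ is regular if and only if $\dim_K (I + \M^2)/\M^2 = r$. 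As $(I+\M^2)/\M^2$ is the $K$-span of the images of the generators $f_1, \dotsc, f_s$ in $\M/\M^2$, the question becomes whether these images span a subspace of dimension exactly $r$ (which is the maximal possible value, since this dimension is always bounded above by $\height I = r$).

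Next I would transport this condition into $\Kahler_R \otimes K$ using the short exact sequence
\[
	0 \to \M/\M^2 \xrightarrow{\ \overline{d_R}\ } \Kahler_R \otimes K \to \Kahler_K \to 0
\]
of Lemma~\ref{lm:2nd-exact-seq-local}; the crucial feature is that $\overline{d_R}$ is injective, which is exactly where working with absolute differentials (over the prime field) rather than differentials over $k$ is needed. Because $\B$ is a differential basis of $A$ (Proposition~\ref{prop:differential-basis}) and $\Kahler_{A_P} = \Kahler_A \otimes_A A_P$, the set $\{ d_R(b) \mid b \in \B\}$ is an $R$-basis of $\Kahler_R$, so its image is a $K$-basis of $\Kahler_R \otimes K$. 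Expanding $d_R(f_i) = \sum_{b \in \B} \frac{\partial f_i}{\partial b}\, d_R(b)$ and reducing modulo $\M$, part i) gives $\overline{d_R}(\overline{f_i}) = \sum_{j=1}^m \overline{\tfrac{\partial f_i}{\partial b_j}}\, d_R(b_j)$, whose coordinate vector is precisely the $i$-th row of the Jacobian matrix reduced modulo $P$. By injectivity of $\overline{d_R}$, the dimension of the span of $\overline{f_1}, \dotsc, \overline{f_s}$ in $\M/\M^2$ equals the dimension of the span of these coordinate vectors, namely the rank of $\Jacobian(f; \tfrac{\partial}{\partial b})$ modulo $P$. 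Combining with the previous paragraph, $C_\p$ is regular if and only if this rank equals $r$.

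The routine parts are the dimension count and the coordinate computation; the step demanding the most care is the reduction modulo $\M$ of the expansion $d_R(f_i) = \sum_b \frac{\partial f_i}{\partial b}\, d_R(b)$, that is, checking that the entries of the Jacobian computed over $A$ reduce correctly to the coordinates of $\overline{d_R}(\overline{f_i})$ and that the operators $\frac{\partial}{\partial b}$ are compatible with localization at $P$. The conceptual crux, however, is the use of the injectivity in Lemma~\ref{lm:2nd-exact-seq-local}: it is what makes the absolute Jacobian matrix detect the regularity of $C_\p$ rather than merely its smoothness over $k$.
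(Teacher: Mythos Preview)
Your proposal is correct and follows essentially the same approach as the paper: localize at $P$, use the dimension formula $\dim C_\p = \dim A_P - r$, and then invoke the injectivity of $\overline{d_R}$ from Lemma~\ref{lm:2nd-exact-seq-local} to identify the dimension of the image of $J$ in $\M/\M^2$ with the rank of the Jacobian modulo $P$. The only cosmetic difference is that the paper packages the last step as a diagram chase (combining the exact sequences $J/J^2 \to \Kahler_A \otimes C \to \Kahler_C \to 0$ and those of Lemma~\ref{lm:2nd-exact-seq-local} for $A$ and $C$), whereas you compute $\dim_K (I+\M^2)/\M^2$ directly; the underlying argument is the same.
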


\begin{proof}
Since $p$-bases are compatible with localization (Lemma~\ref{lm:p-basis:localization}), we may assume without loss of generality that $A = A_P$ and $P = P A_P$. Note that, in this setting, $C = C_{\p}$ and $\p = \p C_{\p}$.

Let $K$ denote the residue field of $(A,P)$ and $(C,\p)$, and set $d = \dim(A) = \rank_K(P / P^2)$. As $A$ is catenary (see \cite[Proposition~5.6.4]{EGA-IV-2}), one has that
\[
	\dim(C) = \dim(A) - \height(J) = d-r .
\]%
Thus we see that $C_{\p}$ is regular if and only if $\rank_K(\p / \p^2) = d-r$.

In virtue of
\cite[Theorem~25.2]{MatsumuraRingTheory}
and Lemma~\ref{lm:2nd-exact-seq-local}, one can construct a natural commutative diagram as follows,
\[ \xymatrix{
	&
		J / J^2 \otimes K  \ar@{=}[r] \ar[d]^-{\lambda} &
		J / J^2 \otimes K  \ar[d]^-{\delta} &
		0  \ar[d] &
		\\
	0 \ar[r] &
		P / P^2  \ar[r]^-{\overline{d_A}} \ar[d]^-{\mu} &
		\Kahler_{A} \otimes K  \ar[r] \ar[d] &
		\Kahler_K  \ar[r] \ar[d] &
		0 \\
	0 \ar[r] &
		\p / \p^2  \ar[r]^-{\overline{d_C}} \ar[d] &
		\Kahler_{C} \otimes K  \ar[r] \ar[d] &
		\Kahler_K  \ar[r] \ar[d] &
		0, \\
	&
		0 &
		0 &
		0
} \]%
where all the rows and columns are exact. According to this diagram, $\delta = \overline{d_A} \circ \lambda$, and hence
\[
	\rank_K (\p / \p^2)
	= d - \rank_K \ker(\mu)
	= d - \rank_K \delta(J / J^2 \otimes K)
	.
\]%
Note that the linear map $\delta$ is given by the matrix $\Jacobian \bigl( f; \tfrac{\partial}{\partial b} \bigr)$ modulo~$P$. Then the rank of $\delta(J / J^2 \otimes K)$ coincides with that of the matrix $\Jacobian \bigl( f; \tfrac{\partial}{\partial b} \bigr)$ modulo~$P$. Thus we see that $C_{\p}$ is regular if and only if $\Jacobian \bigl( f; \tfrac{\partial}{\partial b} \bigr)$ has rank $r$ modulo~$P$.
\end{proof}

\begin{corollary}
Let $A$ be a regular ring of characteristic $p > 0$ which has an absolute $p$-basis, and let $C$ be an algebra of finite type over $A$. Then $\Reg(\Spec(C))$ is a \textup{(}possibly empty\textup{)} open subset of $\Spec(C)$.
\end{corollary}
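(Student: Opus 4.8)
The plan is to reduce to a quotient of a regular ring with an absolute $p$-basis, and then to show openness by producing, around each regular point, a basic open subset of $\Spec(C)$ on which $C$ is regular. First I would handle the reduction. Writing $C = A[x_1,\dots,x_n]/J$ (we may assume $A$ is noetherian, so that $J$ is finitely generated), Lemma~\ref{lm:p-basis:polynomial-extension} shows that $A[x_1,\dots,x_n]$ is again a regular ring admitting an absolute $p$-basis; after renaming it $A$, I may assume that $C = A/J$ with $A$ regular, $J = \langle f_1,\dots,f_s\rangle$, and $A$ carrying an absolute $p$-basis $\B$.

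Next, fix $\p_0 \in \Reg(\Spec(C))$ with preimage $P_0 \subset A$, and set $r = \height(JA_{P_0})$. Since $C_{\p_0} = A_{P_0}/JA_{P_0}$ is a regular local ring and $A_{P_0}$ is a regular local ring, one can choose a regular system of parameters $z_1,\dots,z_d$ of $A_{P_0}$ with $JA_{P_0} = \langle z_1,\dots,z_r\rangle$. By Lemma~\ref{lm:p-basis:rsp-open} there is $f \in A \setminus P_0$ such that $z_1,\dots,z_d \in A_f$ and $\B_0 \cup \{z_1,\dots,z_d\}$ is an absolute $p$-basis of $A_f$ for some $\B_0 \subset \B$. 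Shrinking $f$ further exactly as in the proof of Lemma~\ref{lm:p-basis:quotient-open} (using that $J$ and $\langle z_1,\dots,z_r\rangle$ are finitely generated and agree after localizing at $P_0$, and that $A_f$ is reduced so that the absolute $p$-basis is preserved by Lemma~\ref{lm:p-basis:localization}), I may also assume $JA_f = \langle z_1,\dots,z_r\rangle$. Thus, letting $g$ be the image of $f$ in $C$, one has $C_g = A_f / \langle z_1,\dots,z_r\rangle$, where $z_1,\dots,z_r$ form part of the absolute $p$-basis of the regular ring $A_f$.

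It then remains to prove that $C_g$ is regular, which exhibits $D(g) \subseteq \Reg(\Spec(C))$ as an open neighborhood of $\p_0$ and hence yields the openness of the regular locus. For this, I would take an arbitrary prime $Q \subset A_f$ containing $z_1,\dots,z_r$. By Proposition~\ref{prop:differential-basis} the elements $d_{A_f}(z_1),\dots,d_{A_f}(z_r)$ are part of a free $A_f$-basis of $\Kahler_{A_f}$, hence part of a free basis of $\Kahler_{(A_f)_Q}$, so their images in $\Kahler_{(A_f)_Q} \otimes \kappa(Q)$ are linearly independent. Since the map $\overline{d} \colon Q/Q^2 \to \Kahler_{(A_f)_Q} \otimes \kappa(Q)$ of Lemma~\ref{lm:2nd-exact-seq-local} is injective and carries the classes of $z_1,\dots,z_r$ to these independent elements, the classes of $z_1,\dots,z_r$ are linearly independent in $Q/Q^2$. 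Therefore $z_1,\dots,z_r$ form part of a regular system of parameters of $(A_f)_Q$, so the corresponding localization $(A_f)_Q / \langle z_1,\dots,z_r\rangle$ of $C_g$ is regular; as $Q$ ranges over all primes of $A_f$ containing $z_1,\dots,z_r$, this shows $C_g$ is regular.

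The main obstacle is precisely this last propagation of regularity. The hypothesis provides regularity of $C$ only at the single point $\p_0$, not on a neighborhood, so Lemma~\ref{lm:p-basis:quotient-open} (which assumes $C$ regular throughout) cannot be invoked directly; moreover, a naive argument via the Jacobian criterion of Proposition~\ref{prop:jacobian-criterion-p-bases} is delicate because the height $r = \height(JA_P)$ varies with $P$. The resolution is the observation that regularity at $\p_0$ already forces $JA_{P_0}$ to be generated by part of a regular system of parameters, that this presentation spreads out to a basic open set on which $J$ is generated by part of an absolute $p$-basis, and that such a quotient is automatically regular at every point. Care is needed only to ensure, via Lemma~\ref{lm:p-basis:localization} and the reducedness of $A_f$, that the successive localizations preserve the absolute $p$-basis.
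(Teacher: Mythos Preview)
Your argument is correct. Both you and the paper begin with the same reduction via Lemma~\ref{lm:p-basis:polynomial-extension} to the case $C=A/J$ with $A$ regular and carrying an absolute $p$-basis. From there the paper simply invokes the Jacobian criterion of Proposition~\ref{prop:jacobian-criterion-p-bases}: at a regular point the Jacobian matrix $\Jacobian\bigl(f;\tfrac{\partial}{\partial b}\bigr)$ has rank $r=\height(JA_{P_0})$, so some $r\times r$ minor is invertible on a basic open, and the standard inequality $\rank\le\height$ (visible in the proof of Proposition~\ref{prop:jacobian-criterion-p-bases} from $\rank_K(\p/\p^2)=d-\rank\delta$) forces regularity throughout that open. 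The paper leaves this last unpacking to the reader.

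You instead bypass the matrix formalism: you replace the generators of $J$ by elements $z_1,\dots,z_r$ that are part of a regular system of parameters at $P_0$, use Lemma~\ref{lm:p-basis:rsp-open} to promote them to part of an absolute $p$-basis on a basic open $A_f$, and then appeal to Proposition~\ref{prop:differential-basis} and Lemma~\ref{lm:2nd-exact-seq-local} to see that $z_1,\dots,z_r$ remain part of a regular system at \emph{every} prime of $A_f$ containing them. This is really the same mechanism---linear independence of $dz_1,\dots,dz_r$ in $\Kahler\otimes\kappa(Q)$ is exactly the rank condition on the Jacobian in the adapted $p$-basis---but your presentation makes the propagation of regularity completely explicit and sidesteps the bookkeeping you worried about with the varying height~$r$. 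The paper's route is shorter once Proposition~\ref{prop:jacobian-criterion-p-bases} is in hand; yours is more self-contained and closer in spirit to Lemma~\ref{lm:p-basis:quotient-open}, whose argument you essentially rerun without its global regularity hypothesis.
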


\begin{proof}
Assume that $C = A[x_1, \dotsc, x_n] / J$. By Lemma~\ref{lm:p-basis:polynomial-extension}, if $\B$ is an absolute $p$-basis of $A$, then $\B \cup \{x_1\, \dotsc, x_n\}$ is an absolute $p$-basis of $A[x_1, \dotsc, x_n]$. Thus the result follows easily from the proposition.
\end{proof}

Consider a polynomial ring $k[x_1, \dotsc, x_n]$ over a field $k$ of characteristic $p > 0$, and let $\B_0$ be an absolute $p$-basis of $k$. In virtue of Corollary~\ref{crl:partial-b}, there is family of derivatives in $k$ canonically associated to $\B_0$, say $\left \{\frac{\partial}{\partial b} \mid b \in \B_0 \right \} \subset \Der(k)$. Each of these derivatives can be extended (non-canonically) to a derivative on $k[x_1, \dotsc, x_n]$ by setting $\frac{\partial x_1}{\partial b} = \dotsb = \frac{\partial x_n}{\partial b} = 0$. For the sake of readability, in the following proposition we shall abuse our notation and use the same symbol to refer to this extension of $\frac{\partial}{\partial b}$ to $k[x_1, \dotsc, x_n]$.

\begin{corollary}[{Zariski \cite[Theorem~11]{Zariski47}}]%
\label{crl:jacobian-criterion-polynomials}
Let $A = k[x_1, \dotsc, x_n]$ be a polynomial ring over a \textup{(}possibly non-perfect\textup{)} field $k$ of characteristic $p > 0$. Fix an absolute $p$-basis of $k$, say $\B_0$. Let $J = \langle f_1, \dotsc, f_s \rangle$ be an ideal in $A$, and consider a prime ideal $P \subset A$ such that $J \subset P$. Set $C = A / J$ and $\p = P / J \subset C$. Then:
\begin{enumerate}[i\textup{)}]

\item The set $\B_0 \cup \{x_1, \dotsc, x_n\}$ is an absolute $p$-basis of $A$, and there is a finite subset $\{b_1, \dotsc, b_m\} \subset \B_0$ so that $\frac{\partial f_1}{\partial b'} = \dotsb = \frac{\partial f_s}{\partial b'} = 0$ for every $b' \in \B \setminus \{b_1, \dotsc, b_m\}$;

\item Assume that $\height(JA_P) = r$. Then $C_{\p}$ is regular if and only if the Jacobian matrix
\[
	\Jacobian \left ( f;
		\tfrac{\partial}{\partial b},
		\tfrac{\partial}{\partial x} \right )
	:= \begin{pmatrix}
		\frac{\partial f_1}{\partial b_1} & \hdots &
			\frac{\partial f_1}{\partial b_m} &
			\frac{\partial f_1}{\partial x_1} & \hdots &
			\frac{\partial f_1}{\partial x_n} \\
		\vdots & \ddots & \vdots & \vdots & \ddots & \vdots \\
		\frac{\partial f_s}{\partial b_1} & \hdots &
			\frac{\partial f_s}{\partial b_m} &
			\frac{\partial f_s}{\partial x_1} & \hdots &
			\frac{\partial f_s}{\partial x_n}
	\end{pmatrix}
\]%
has rank $r$ modulo $P$ \textup{(}that is, if and only if it contains an $r \times r$ minor whose determinant is non-zero modulo $P$\textup{)}.
\end{enumerate}
\end{corollary}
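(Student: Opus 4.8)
The plan is to obtain this statement as a direct application of Proposition~\ref{prop:jacobian-criterion-p-bases}, the only additional input being the identification of an explicit absolute $p$-basis of the polynomial ring. First I would record that $A = k[x_1, \dotsc, x_n]$ is a regular ring (a polynomial ring over a field is regular), so that the hypotheses of Proposition~\ref{prop:jacobian-criterion-p-bases} are available once an absolute $p$-basis is exhibited. For the latter, I would apply Lemma~\ref{lm:p-basis:polynomial-extension} with $\mathbb{F}_p$ as the base ring and $A_0 = k$: since $\B_0$ is an absolute $p$-basis of $k$, i.e.\ a $p$-basis of $k$ over $\mathbb{F}_p$, the lemma yields that $\B = \B_0 \cup \{x_1, \dotsc, x_n\}$ is a $p$-basis of $A$ over $\mathbb{F}_p$, that is, an absolute $p$-basis of $A$. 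This proves the first assertion of part~(i).

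For the finiteness assertion in part~(i), I would invoke the corollary following Corollary~\ref{crl:partial-b}, which guarantees that each $f_i$ satisfies $\frac{\partial f_i}{\partial b} \neq 0$ for only finitely many $b \in \B$. Taking the union of these finite sets over $i = 1, \dotsc, s$ and discarding the variables $x_1, \dotsc, x_n$ (which are retained in the Jacobian in any case), I obtain a finite subset $\{b_1, \dotsc, b_m\} \subset \B_0$ off of which all the partials $\frac{\partial f_i}{\partial b'}$ vanish.

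For part~(ii), the key point is to match the two systems of derivations. Each $\frac{\partial}{\partial b}$ with $b \in \B_0$, extended to $A$ by $\frac{\partial x_j}{\partial b} = 0$, is $\mathbb{F}_p$-linear and sends $c \mapsto \delta_{bc}$ for every $c \in \B$; by the uniqueness in Corollary~\ref{crl:partial-b} it therefore coincides with the canonical derivative associated to $b$ as an element of the $p$-basis $\B$, and likewise each $\frac{\partial}{\partial x_j}$ is the canonical derivative attached to $x_j \in \B$. Consequently the columns of the Jacobian $\Jacobian\bigl(f; \tfrac{\partial}{\partial b}\bigr)$ of Proposition~\ref{prop:jacobian-criterion-p-bases} that can be nonzero are exactly those indexed by $b_1, \dotsc, b_m$ together with $x_1, \dotsc, x_n$, so that matrix agrees, up to adjoining columns of zeros, with the extended Jacobian $\Jacobian\bigl(f; \tfrac{\partial}{\partial b}, \tfrac{\partial}{\partial x}\bigr)$ of the corollary. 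Since adjoining zero columns does not change the rank, Proposition~\ref{prop:jacobian-criterion-p-bases}(ii) gives at once that $C_{\p}$ is regular if and only if the extended Jacobian has rank $r$ modulo $P$.

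I do not expect a genuine obstacle here, since the corollary is essentially a specialization of the proposition; the only point requiring care is the bookkeeping just described, namely verifying that the variable-derivations $\frac{\partial}{\partial x_j}$ and the field-derivations $\frac{\partial}{\partial b}$ for $b \in \B_0$ are precisely the canonical derivatives attached to the $p$-basis $\B$, so that the two Jacobian matrices genuinely coincide and the rank condition transfers verbatim.
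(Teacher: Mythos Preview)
Your proposal is correct and follows essentially the same approach as the paper: establish that $\B_0 \cup \{x_1,\dotsc,x_n\}$ is an absolute $p$-basis of $A$ (the paper cites a lemma for this, as you do via Lemma~\ref{lm:p-basis:polynomial-extension}) and then apply Proposition~\ref{prop:jacobian-criterion-p-bases} directly. Your additional bookkeeping---matching the extended derivations $\frac{\partial}{\partial b}$, $\frac{\partial}{\partial x_j}$ with the canonical ones from Corollary~\ref{crl:partial-b} and noting that zero columns do not affect the rank---is more explicit than the paper's terse ``the result follows immediately,'' but the argument is the same.
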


\begin{proof}
By Lemma~\ref{lm:p-basis:localization}, the set $\B = \B_0 \cup \{x_1, \dotsc, x_n\}$ is an absolute $p$-basis of $A$. Thus the result follows immediately from Proposition~\ref{prop:jacobian-criterion-p-bases}.
\end{proof}

\begin{example} \label{ex:jacobian-criterion}
Consider the field $\mathbb{F}_p(v)$, where $v$ represents a transcendental element, the polynomial $f = x^p + v y^p \in k[x,y]$, and the variety
\[
	H = \Spec( k[x,y] / \langle f \rangle )
	\subset \mathbb{A}^2_k .
\]%
Observe that $H$ has codimension $1$ in $\mathbb{A}^2_k$, and that a $p$-basis of $k[x,y]$ is given by $\{ v, x, y \}$ (see Lemma~\ref{lm:p-basis:polynomial-extension}). Thus, according to Corollary~\ref{crl:jacobian-criterion-polynomials}, in order to compute the singular locus of $H$, we shall calculate the Jacobian matrix
\[
	\Jacobian \left ( f;
		\tfrac{\partial}{\partial v},
		\tfrac{\partial}{\partial x},
		\tfrac{\partial}{\partial y} \right )
	= \begin{pmatrix}
		\frac{\partial f}{\partial v} &
			\frac{\partial f}{\partial x} &
			\frac{\partial f}{\partial y}
		\end{pmatrix}
	= \begin{pmatrix}
		y^p &
			0 &
			0
		\end{pmatrix} .
\]%
Clearly, this matrix has rank $1$ everywhere in $H$ except at the point of the origin. Hence the singular locus of $H$ consists only of the point at the origin.
\end{example}

\section{Absolute \texorpdfstring{$p$}{p}-bases and differential operators of higher order}
\label{sec:differential-operators}

Let $A$ be a regular ring of characteristic $p > 0$. In this section we show that, given an absolute $p$-basis of $A$, say $\B$, there is a natural family of differential operators on $A$ associated to $\B$ which behaves mostly like the family of differential operators with respect to the variables on a polynomial ring. The following lines are intended to fix the setting and notation for Proposition~\ref{prop:p-basis:diffs}.

\begin{definition}
Consider a ring $k$, a $k$-algebra $A$, and an $A$-module $M$.
A map $\Delta : A \to M$ is said to be a \emph{differential operator of order zero from $A$ to $M$ over $k$} if it is $A$-linear. For $n > 0$, a map $\Delta : A \to M$ is said to be a \emph{differential operator of order $n$ over $k$} if it is $k$-linear and, for every $a \in A$,
\begin{align*}
	[a,\Delta] : A &\longrightarrow M \\
	f &\longmapsto a \Delta(f) - \Delta(af)
\end{align*}
is a differential operator of order $n-1$ over $k$.
\end{definition}

\begin{lemma}[{\cite[Ch.~III, Lemma~1.2.2]{GiraudEtudeLocale}}]
\label{lm:p-basis:pe-linear}
Let $k$ be a ring of characteristic $p > 0$, let $A$ be an arbitrary $k$-algebra, and let $M$ be an $A$-module. If $\Delta : A \to M$ is a differential operator of order $n$ over $k$, then $\Delta$ is $k[A^{p^e}]$-linear for every $p^e > n$.
\end{lemma}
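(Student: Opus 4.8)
The plan is to reduce the whole statement to the single assertion that $[a^{p^e},\Delta]=0$ for every $a\in A$, and then to prove this assertion via one Frobenius identity for commutators. First I would observe that the set
\[
	Z = \{\, c \in A \mid \Delta(cf) = c\,\Delta(f) \text{ for all } f \in A \,\}
\]
is a subring of $A$ containing (the image of) $k$. Indeed, it is an additive subgroup by the $k$-linearity of $\Delta$, it contains $k$ because $\Delta$ is $k$-linear, and it is closed under products since $\Delta(c_1c_2f)=c_1\Delta(c_2f)=c_1c_2\Delta(f)$ whenever $c_1,c_2\in Z$. As $k[A^{p^e}]$ is by definition the subring of $A$ generated by $k$ together with the powers $\{a^{p^e}\mid a\in A\}$, showing $a^{p^e}\in Z$ for all $a$ gives $k[A^{p^e}]\subseteq Z$, which is exactly the claimed $k[A^{p^e}]$-linearity.

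The key step is to identify $[a^{p^e},\Delta]$ with an iterated commutator. For a fixed $a\in A$ I would introduce, on the $\mathbb{F}_p$-vector space $\Hom_k(A,M)$, the post-multiplication operator $\ell_a(\Delta)=a\cdot\Delta(-)$ and the pre-composition operator $r_a(\Delta)=\Delta(a\cdot-)$. A direct check shows that $\ell_a$ and $r_a$ commute, that $\ell_a^{\,j}=\ell_{a^j}$ and $r_a^{\,j}=r_{a^j}$, and that the map $\Delta\mapsto[a,\Delta]$ equals $\ell_a-r_a$. Since $\Hom_k(A,M)$ is an $\mathbb{F}_p$-vector space, the commutative ring generated by $\ell_a$ and $r_a$ has characteristic $p$, so the Frobenius identity $(\ell_a-r_a)^{p^e}=\ell_a^{\,p^e}-r_a^{\,p^e}$ holds. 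Evaluating on $\Delta$ yields
\[
	[a^{p^e},\Delta] = (\ell_a-r_a)^{p^e}(\Delta) = \underbrace{[a,[a,\dotsc,[a,\Delta]\dotsc]]}_{p^e},
\]
the $p^e$-fold iterated commutator with the single element $a$.

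Finally I would run the order induction built into the definition. If $\Delta$ has order $n\geq 1$, then $[a,\Delta]$ has order $n-1$; iterating with the same $a$, the $m$-fold commutator has order $n-m$, and an operator of order $0$ is $A$-linear and hence annihilated by a further commutator with $a$. Thus the $m$-fold iterated commutator vanishes whenever $m>n$. Taking $m=p^e>n$ gives $[a^{p^e},\Delta]=0$, so $a^{p^e}\in Z$; combined with the subring reduction of the first paragraph, $\Delta$ is $k[A^{p^e}]$-linear.

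The main obstacle is really the commutator identity of the middle paragraph: one must check carefully that the \emph{post}-multiplication $\ell_a$ (acting in the target $M$) and the \emph{pre}-composition $r_a$ (acting in the source $A$) genuinely commute as operators on $\Hom_k(A,M)$, despite $A$ and $M$ being a priori different modules, so that the freshman's-dream identity $(\ell_a-r_a)^{p^e}=\ell_a^{\,p^e}-r_a^{\,p^e}$ is legitimate. Once that is in place, the reduction to a subring and the order induction are routine and purely definitional.
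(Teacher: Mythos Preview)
Your argument is correct. The paper does not supply its own proof of this lemma; it simply cites Giraud, so there is nothing in the paper to compare against directly. That said, your proof is precisely the classical one: reduce to showing $a^{p^e}$ lies in the centraliser subring $Z$, then use the commuting endomorphisms $\ell_a$ and $r_a$ of $\Hom_k(A,M)$ to get the Frobenius identity $(\ell_a-r_a)^{p^e}=\ell_{a^{p^e}}-r_{a^{p^e}}$, and finish by the built-in order drop. The only point worth a small remark is the base case of your induction: once the iterated commutator reaches order~$0$ it is $A$-linear, hence the next application of $[a,\,\cdot\,]$ kills it; since $p^e>n$ implies $p^e\geq n+1$, the $p^e$-fold commutator is indeed zero. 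Your check that $\ell_a$ and $r_a$ commute (one acts on the target, the other on the source) is the genuinely delicate step, and you have it right.
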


\subsection*{Multi-index notation} Let $\Lambda$ be a possibly infinite set of indices. We shall denote by $\mathbb{N}^{\oplus\Lambda}$ the set of $\Lambda$-tuples with almost all entries equal to zero (i.e., except for a finite number of them). Given a tuple $\beta = (\beta_\lambda) \in \mathbb{N}^{\oplus\Lambda}$, we define the order of $\beta$ as $\lvert \beta \rvert := \sum_{\lambda \in \Lambda} \beta_\lambda$. Given two tuples $\alpha = (\alpha_\lambda)$ and $\beta = (\beta_\lambda)$, we will say that $\alpha \leq \beta$ if $\alpha_\lambda \leq \beta_\lambda$ for all $\lambda \in \Lambda$. Similarly, we will say that $\alpha < \beta$ if $\alpha \leq \beta$ and $\alpha \neq \beta$. For two tuples $\alpha, \beta \in \mathbb{N}^{\oplus\Lambda}$, we shall define the binomial coefficient
\[
	\binom{\alpha}{\beta}
	:= \prod_{\lambda \in \Lambda}
		\binom{\alpha_\lambda}{\beta_\lambda} \in \mathbb{N},
\]%
taking $\binom{\alpha_\lambda}{\beta_\lambda} = 0$ whenever $\alpha_\lambda < \beta_\lambda$. In addition, given a ring $R$ and a $\Lambda$-indexed subset of $R$, say $T = \{t_\lambda \mid \lambda \in \Lambda\} \subset R$, for each $\beta \in \mathbb{N}^{\oplus\Lambda}$ we shall define the monomial
\[
	T^\beta
	:= \prod_{\lambda \in \Lambda} t_\lambda^{\beta_\lambda} \in A.
\]%

\subsection*{Differential operators on a polynomial ring} Given a ring $A$ and possibly infinite set of variables $X = \{x_\lambda \mid \lambda \in \Lambda\}$, consider the polynomial ring $A[X]$. Let $T = \{t_\lambda \mid \lambda \in \Lambda\}$ be a different set of variables, and set $X+T := \{ x_\lambda + t_\lambda \mid \lambda \in \Lambda\}$. For a polynomial $F(X) \in A[X]$, let us consider the following expansion of $F(X+T) \in A[X,T]$:
\[
	F(X+T)
	= \sum_{\alpha \in \mathbb{N}^{\oplus \Lambda}}
		F_\alpha(X) \cdot T^\alpha,
	\qquad F_\alpha(X) \in A[X].
\]%
Then, for each $\beta \in \mathbb{N}^{\oplus \Lambda}$, we shall define the Taylor operator $\Tay^\beta : A[X] \to A[X]$ by $\Tay^\beta\bigl(F(X)\bigr) = F_\beta(X)$. Note that $\Tay^\beta$ is $A$-linear for every $\beta$. In fact, it can be proved that $\Tay^\beta$ is a differential operator of order $\lvert \beta \rvert$ over $A$ for every $\beta \in \mathbb{N}^{\oplus \B}$ (see \cite[Ch.~III, Example~1.2.4]{GiraudEtudeLocale}). In addition, one can check that, for $\alpha \in \mathbb{N}^{\oplus \Lambda}$,
\[
	\Tay^\beta(X^\alpha)
	= \binom{\alpha}{\beta} X^{\alpha - \beta}.
\]%

\begin{proposition}  \label{prop:p-basis:diffs}
Let $A$ be a reduced ring containing $\mathbb{F}_p$ that has an absolute $p$-basis, say $\B$. Then, for each $\beta \in \mathbb{N}^{\oplus \B}$, there exists a differential operator of order $\abs{\beta}$ over $\mathbb{F}_p$, say $D^{[\B;\beta]}_{A} : A \to A$, such that
\begin{equation}  \label{eq:p-basis:prop-diffs-binom-condition}
	D^{[\B;\beta]}_{A} (\B^\alpha) = \binom{\alpha}{\beta} \B^{\alpha - \beta}
\end{equation}%
for all $\alpha \in \mathbb{N}^{\oplus \B}$.
\end{proposition}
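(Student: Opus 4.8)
The plan is to realise $D^{[\B;\beta]}_A$ as the \emph{descent} of a Taylor operator from a polynomial presentation of $A$, where the crucial point --- which is also what forces the reducedness hypothesis --- is that one must present $A$ not over $A^p$ but over a sufficiently high Frobenius power $A^{p^e}$. First I would build the Frobenius tower. Since $A$ is reduced, the Frobenius map $A \to A^p$ is an isomorphism of rings, and it carries the absolute $p$-basis $\B$ of $A$ over $A^p$ to an absolute $p$-basis $\B^{p^e} = \{b^{p^e} \mid b \in \B\}$ of $A^{p^e}$ over $A^{p^{e+1}} = (A^{p^e})^p$, for every $e \geq 0$. Iterating the freeness furnished by Lemma~\ref{lm:p-basis-presentation}, one checks by induction on $e$ that $A$ is a free $A^{p^e}$-module with the $p^e$-monomials $\{\B^\gamma \mid 0 \leq \gamma_b < p^e\}$ as a basis; comparing this basis with that of the polynomial ring, the natural surjection becomes an isomorphism
\[
	A \;\simeq\; A^{p^e}[x_b \mid b \in \B]
		\big/ \langle x_b^{p^e} - b^{p^e} \mid b \in \B \rangle ,
\]
where I have named the presentation variables $X = \{x_b \mid b \in \B\}$.

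Now fix $\beta \in \mathbb{N}^{\oplus \B}$ and choose $e$ so large that $p^e > \beta_b$ for every $b \in \B$ (possible, as only finitely many $\beta_b$ are nonzero). Write $C = A^{p^e}[X]$ and $J = \langle x_b^{p^e} - b^{p^e} \mid b \in \B \rangle$, so that $A \simeq C/J$. On $C$ consider the Taylor operator $\Tay^\beta : C \to C$ relative to the variables $X$: it is $A^{p^e}$-linear, it is a differential operator of order $\abs{\beta}$ over $A^{p^e}$ (hence over $\mathbb{F}_p$), and $\Tay^\beta(X^\alpha) = \binom{\alpha}{\beta} X^{\alpha - \beta}$. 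The heart of the argument is to show $\Tay^\beta(J) \subseteq J$. Using the Leibniz-type identity $\Tay^\beta(FG) = \sum_{\beta' + \beta'' = \beta} \Tay^{\beta'}(F)\,\Tay^{\beta''}(G)$ (immediate from $F(X+T)G(X+T) = (FG)(X+T)$), it suffices to check that $\Tay^{\beta''}(x_b^{p^e} - b^{p^e}) \in J$ for every $\beta'' \leq \beta$ and every $b$. Letting $e_b \in \mathbb{N}^{\oplus \B}$ denote the index with a single $1$ in entry $b$, one computes $\Tay^{\beta''}(b^{p^e}) = \delta_{\beta'',0}\, b^{p^e}$ (as $b^{p^e}$ lies in the base ring $A^{p^e}$ and $\Tay^{\beta''}$ is $A^{p^e}$-linear), while $\Tay^{\beta''}(x_b^{p^e}) = \binom{p^e e_b}{\beta''} X^{p^e e_b - \beta''}$, which is nonzero only when $\beta''$ is supported at $b$, with value $\binom{p^e}{\beta''_b}\,x_b^{p^e - \beta''_b}$. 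The coefficient $\binom{p^e}{j}$ vanishes modulo $p$ for $0 < j < p^e$ (Kummer's theorem, equivalently the freshman's dream), and the only surviving nonzero binomial $\binom{p^e}{p^e} = 1$ would require $\beta''_b = p^e$, which is excluded by $\beta''_b \leq \beta_b < p^e$. Thus each value is either the generator itself (when $\beta'' = 0$) or zero, so indeed $\Tay^\beta(J) \subseteq J$.

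Consequently $\Tay^\beta$ descends to a well-defined map $D^{[\B;\beta]}_A : A \to A$. A routine induction on the order shows that the descent of a differential operator preserving an ideal is again a differential operator over $\mathbb{F}_p$ of order at most $\abs{\beta}$ (for order $0$ it is $A$-linearity, and for positive order one uses that $[a,\Delta]$ also preserves $J$ and that every residue lifts), which is the asserted order. Finally, the quotient map $\pi : C \to A$ sends $x_b \mapsto b$ and intertwines $\Tay^\beta$ with $D^{[\B;\beta]}_A$; applying $\pi$ to the identity $\Tay^\beta(X^\alpha) = \binom{\alpha}{\beta} X^{\alpha - \beta}$ therefore yields $D^{[\B;\beta]}_A(\B^\alpha) = \binom{\alpha}{\beta} \B^{\alpha - \beta}$ for every $\alpha \in \mathbb{N}^{\oplus \B}$, which is exactly \eqref{eq:p-basis:prop-diffs-binom-condition}.

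The main obstacle is precisely the choice of $e$. If one naively used the presentation of $A$ relative to $A^p$ (that is, took $e = 1$), the Taylor operator would fail to preserve the defining ideal as soon as some $\beta_b \geq p$: already $\Tay^{p\,e_b}(x_b^p - b^p) = \binom{p}{p} = 1 \notin J$. Hence the construction genuinely requires passing to a Frobenius power $p^e$ large relative to $\beta$, and this passage --- through the isomorphisms $A \simeq A^p \simeq \dotsb$ underlying the free tower --- is exactly where the hypothesis that $A$ be reduced is used.
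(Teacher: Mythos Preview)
Your proof is correct and follows essentially the same route as the paper: present $A$ over $A^{p^e}$ for $e$ large, take the Taylor operator $\Tay^\beta$ on the polynomial ring, and check it preserves the defining ideal so that it descends to $A$. The only cosmetic difference is in that last check: the paper invokes the general fact (Lemma~\ref{lm:p-basis:pe-linear}) that a differential operator of order $< p^e$ is $A^{p^e}[t_b^{p^e} \mid b \in \B]$-linear, whereas you compute directly via the Leibniz expansion and the vanishing of $\binom{p^e}{j}$ modulo $p$ --- these are two phrasings of the same phenomenon.
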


\begin{proof}
\newcommand{\A}{A^{p^e}[T]}
Fix $\beta \in \mathbb{N}^{\oplus \B}$, choose $e > 0$ so that $p^e > \abs{\beta}$, and let $T = \{t_b \mid b \in \B \}$ be a set of variables. By definition of $p$-basis, we have that
\[
	A \simeq A^p [T]
		/ \bigl\langle t_b^p - b^p \mid b \in \B \bigr\rangle.
\]%
Since $A$ is reduced, $A \simeq A^p$ via the Frobenius isomorphism. Thus, by induction on $e$, we get
\[
	A \simeq A^{p^e} [T]
		/ \bigl\langle t_b^{p^e} - b^{p^e} \mid b \in \B \bigr\rangle.
\]%
Set $J = \bigl\langle t_b^{p^e} - b^{p^e} \mid b \in \B \bigr\rangle \subset \A$, in such a way that $A \simeq \A / J$. Regarding $\A$ as a polynomial ring over $A^{p^e}$, and according to the discussion preceding this proposition, there is an $A^{p^e}$-linear differential operator $\Tay^\beta : \A \to \A$ such that
\[
	\Tay^\beta(T^\alpha) = \binom{\alpha}{\beta} T^{\alpha - \beta}
\]%
for $\alpha \in \mathbb{N}^{\oplus \B}$. We will show that $\Tay^\beta$ induces a natural differential operator $D^{[\B;\beta]}_{A} : A \to A$ of order $\abs{\beta}$ over $\mathbb{F}_p$ via the quotient map $\A \to A \simeq \A / J$.

By composing $\Tay^\beta$ with the map $\A \to A$, we obtain a differential operator $\overline{\Tay^\beta} : \A \to A$. Note that $\overline{\Tay^\beta}(T^\alpha) = \binom{\alpha}{\beta} \B^{\alpha - \beta}$ for $\alpha \in \mathbb{N}^{\oplus \B}$. In order to see that $\overline{\Tay^\beta}$ induces a differential operator $D^{[\B;\beta]}_{A} : A \to A$, we shall verify that it annihilates the ideal $J \subset \A$.

Recall that $\Tay^\beta : \A \to \A$ is a differential operator of order $\abs{\beta} < p^e$ over $A^{p^e}$. Therefore, by Lemma~\ref{lm:p-basis:pe-linear}, $\Tay^\beta$ is linear over the subring
\[
	A^{p^e} \left[ t_b^{p^e} \mid b \in \B \right]
	\subset \A
	.
\]%
Thus we see that, for each element $b \in \B$ and each polynomial $f \in \A$,
\[
	\Tay^\beta \left ( \bigl( t_b^{p^e} - b^{p^e} \bigr) \cdot f \right )
	= \bigl( t_b^{p^e} - b^{p^e} \bigr) \cdot \Tay^\beta(f)
	\in J.
\]%
This implies that $\overline{\Tay^\beta}$ annihilates $J$, and hence it induces a differential operator on $A$, say $D^{[\B;\beta]}_{A} : A \to A$, as required.
\end{proof}

\begin{remark}
At first glance, it may seem that the definition $D^{[\B;\beta]}_A$ depends on the choice of the integer $e \gg 0$. However, as follows from Corollary~\ref{crl:p-basis:diff-smooth} below, there is a unique differential operator on $A$ of order $\abs{\beta}$ over $\mathbb{F}_p$ satisfying condition \eqref{eq:p-basis:prop-diffs-binom-condition} of Proposition~\ref{prop:p-basis:diffs}. Thus $D^{[\B;\beta]}_A$ is well-defined, in the sense that it does not depend on the choice of the integer $e$.
\end{remark}

\begin{corollary}  \label{crl:p-basis:diff-smooth}
Let $A$ be a reduced ring containing $\mathbb{F}_p$ that admits an absolute $p$-basis, say $\B$. Then $A$ is differentially smooth over $\mathbb{F}_p$ \textup{(}in the sense of Grothendieck \cite[\S16.10]{EGA-IV-4}\textup{)}. Moreover, the family of differential operators $D^{[\B;\beta]}$ \textup{(}as defined in Proposition~\ref{prop:p-basis:diffs}\textup{)} has the following properties:
\begin{enumerate} [i\textup{)}]
\item For all $\beta, \beta' \in \mathbb{N}^{\oplus \B}$,
\[
	D_A^{[\B;\beta]} \circ D_A^{[\B;\beta']}
	= D_A^{[\B;\beta']} \circ D_A^{[\B;\beta]}
	= \frac{(\beta + \beta')!}{\beta! \beta'!} \cdot D_A^{[\B;\beta + \beta']} .
\]%
\item For any element $f \in A$, there exists a finite number of indices $\beta \in \mathbb{N}^{\oplus \B}$ so that $D_A^{[\B;\beta]} (f) \neq 0$.
\item Every differential operator from $A$ to an $A$-module $M$ of order $n$ \textup{(}over $\mathbb{F}_p$\textup{)}, say $\Delta : A \to M$, has a decomposition of the form
\[
	\Delta
	= \sum_{\abs{\beta} \leq n} c_\beta \cdot D_A^{[\B;\beta]} ,
\]%
with
\[
	c_\beta
	= \sum_{\gamma \leq \beta}
		\binom{\beta}{\gamma}
		(-1)^{\abs{\gamma}}
		\cdot \B^\gamma
		\cdot \Delta(\B^{\beta - \gamma})
	\in M
\]%
for $\abs{\beta} \leq n$ \textup{(}note that, although $\beta$ runs over a possibly infinite set, $\Delta(f)$ is a finite sum for each $f \in A$ by the previous condition\textup{)}.
\end{enumerate}%
\end{corollary}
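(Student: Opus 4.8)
The plan is to reduce every assertion to the polynomial presentation $A \simeq A^{p^e}[T]/J$ used in the proof of Proposition~\ref{prop:p-basis:diffs}, together with two facts: a differential operator of order $< p^e$ over $\mathbb{F}_p$ is $A^{p^e}$-linear (Lemma~\ref{lm:p-basis:pe-linear}), and the $p$-monomials $\{\B^\alpha \mid 0 \leq \alpha_b < p^e\}$ form an $A^{p^e}$-basis of $A$. The resulting \emph{reduction principle} is that two differential operators of order $\leq n$ coincide as soon as they agree on every $\B^\alpha$, provided $p^e > n$. Since $D^{[\B;\beta]}_A(\B^\alpha) = \binom{\alpha}{\beta}\B^{\alpha-\beta}$, this converts each statement into an identity of binomial coefficients indexed by $\mathbb{N}^{\oplus\B}$, the workhorse being the componentwise Vandermonde identity $\binom{\alpha}{\gamma}\binom{\alpha-\gamma}{\delta} = \binom{\alpha}{\gamma+\delta}\binom{\gamma+\delta}{\gamma}$.

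For (i) I would fix $\beta,\beta'$ and choose $e$ with $p^e > \abs{\beta}+\abs{\beta'}$, so that both operators, their composite, and $D^{[\B;\beta+\beta']}_A$ are all $A^{p^e}$-linear; by the reduction principle it suffices to evaluate on $\B^\alpha$. Applying $D^{[\B;\beta']}_A$ and then $D^{[\B;\beta]}_A$ yields $\binom{\alpha}{\beta'}\binom{\alpha-\beta'}{\beta}\B^{\alpha-\beta-\beta'}$, which by the Vandermonde identity equals $\frac{(\beta+\beta')!}{\beta!\beta'!}\binom{\alpha}{\beta+\beta'}\B^{\alpha-\beta-\beta'} = \frac{(\beta+\beta')!}{\beta!\beta'!}D^{[\B;\beta+\beta']}_A(\B^\alpha)$. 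The middle expression is symmetric in $\beta$ and $\beta'$, giving both the commutation and the composition formula at once.

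For (ii) I would fix $f$ and expand it as a finite $A^{p^e}$-combination $f = \sum_\alpha a_\alpha \B^\alpha$ with $0 \leq \alpha_b < p^e$. For every $\beta$ with $\abs{\beta} < p^e$, the $A^{p^e}$-linearity gives $D^{[\B;\beta]}_A(f) = \sum_\alpha a_\alpha \binom{\alpha}{\beta}\B^{\alpha-\beta}$, which is nonzero only when $\beta \leq \alpha$ for one of the finitely many $\alpha$ occurring; hence at each bounded order only finitely many $\beta$ survive. The main obstacle is bounding the order of $\beta$ uniformly, i.e.\ showing $D^{[\B;\beta]}_A(f) = 0$ once $\abs{\beta}$ is large. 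I would obtain this from a bound on the degree of a representative of $f$ in the elements of $\B$, so that $D^{[\B;\beta]}_A$ strictly lowers that degree; this is the genuinely delicate input and the step I expect to require the most care.

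Part (iii) is the computational core. Given $\Delta$ of order $n$, I would first note that the proposed coefficients satisfy $c_\beta = 0$ whenever $\abs{\beta} > n$, since $c_\beta$ is a signed $\abs{\beta}$-fold iterated commutator of $\Delta$ with multiplication operators evaluated at $1$, and such commutators vanish above the order. Thus $\Delta' := \sum_{\abs{\beta}\leq n} c_\beta D^{[\B;\beta]}_A$ agrees with the formal sum over all $\beta$, and, being of order $\leq n$, by the reduction principle it suffices to check $\Delta'(\B^\alpha) = \Delta(\B^\alpha)$. Substituting the definition of $c_\beta$, writing each index as $\gamma+\delta$, and applying the Vandermonde identity collapses the double sum to $\sum_\delta \binom{\alpha}{\delta}\B^{\alpha-\delta}\Delta(\B^\delta)\sum_{\gamma}(-1)^{\abs{\gamma}}\binom{\alpha-\delta}{\gamma}$; the inner factor equals $\prod_b (1-1)^{(\alpha-\delta)_b}$, namely $1$ if $\delta=\alpha$ and $0$ otherwise, leaving exactly $\Delta(\B^\alpha)$. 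This binomial inversion proves $\Delta = \Delta'$. Finally, (i) and (iii) say precisely that, for each $n$, the operators $\{D^{[\B;\beta]}_A \mid \abs{\beta}\leq n\}$ form an $A$-basis of the order-$\leq n$ differential operators dual to the basis of the principal parts $\mathcal{P}^n_{A/\mathbb{F}_p}$ given by the $\B^\alpha$; this divided-power structure is Grothendieck's criterion for differential smoothness (\cite[\S16.10]{EGA-IV-4}), which I would invoke to conclude.
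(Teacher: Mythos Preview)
Your approach is quite different from the paper's: the paper simply notes that $\Kahler_A$ is free with basis $\{d_A(b) : b \in \B\}$ (Proposition~\ref{prop:differential-basis}) and then invokes \cite[Theorem~16.11.2]{EGA-IV-4}, which packages differential smoothness together with all three properties in one stroke. Your hands-on route via the presentation $A \simeq A^{p^e}[T]/J$, $A^{p^e}$-linearity, and binomial identities is more elementary and makes the mechanism transparent; for (i) and (iii) it is correct and essentially reproduces what the EGA theorem encodes. The binomial-inversion argument in (iii) is exactly the right computation, and your deduction of differential smoothness from the resulting dual bases of $\mathcal{P}^n_{A/\mathbb{F}_p}$ is the same criterion the paper's citation rests on.

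The gap is in (ii). You correctly isolate the delicate step---bounding $|\beta|$ uniformly---but the proposed fix via a ``degree of a representative of $f$ in $\B$'' cannot work, because the statement as literally written is in fact false. Take $A = \mathbb{F}_p(t)$ with $\B = \{t\}$ and $f = 1/t$: writing $1/t = t^{-p^e}\cdot t^{p^e-1}$ in $A^{p^e}[T]/\langle T^{p^e}-t^{p^e}\rangle$ one finds $D^{[\B;\beta]}_A(1/t) = \binom{p^e-1}{\beta}\,t^{-1-\beta} = (-1)^\beta t^{-1-\beta} \neq 0$ for every $\beta \geq 0$. What your first argument (and the cited EGA theorem) \emph{does} establish is the bounded version: for each fixed $n$, only finitely many $\beta$ with $|\beta| \leq n$ satisfy $D^{[\B;\beta]}_A(f) \neq 0$. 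This weaker finiteness is all that is used in (iii) and in the later applications (e.g.\ Proposition~\ref{prop:generalization-giraud-p}), so your argument already suffices for what is needed; drop the attempt to bound the order globally.
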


\begin{proof}
By Proposition~\ref{prop:differential-basis}, $\Kahler_A$ is a free $A$-module with $\{d_A(b) \mid b \in \B \}$ as basis. Thus the differential smoothness and properties i), ii), and iii) follow from \cite[Theorem~16.11.2]{EGA-IV-4}.
\end{proof}

\section{Differential saturation and order of ideals}

\label{sec:differential-saturation-and-order}

In this section we intend to characterize the set of points of a variety where a given ideal has order greater than or equal to certain integer $N$. To this end, we shall make use of the differential operators introduced in the previous section.

\begin{definition}
Let $A$ be a regular ring and fix a prime ideal $\q \subset A$. The \emph{order of an element $f \in A$ at $\q$} is defined by
\[
	\ord_{\q}(f)
	= \max \left \{ n \in \mathbb{N}
		\mid f \in \q^n A_{\q} \right \}.
\]%
Similarly, the \emph{order of an ideal $I \subset A$ at $\q$} is defined by
\[
	\ord_{\q}(I)
	= \max \left \{ n \in \mathbb{N}
		\mid I A_{\q} \subset \q^n A_{\q} \right \} .
\]%
Given a regular variety $Z$ and a coherent ideal sheaf $\II \subset \OO_Z$, we define the \emph{order of $\II$ at a point $\xi \in Z$} as
\[
	\ord_\xi(\II)
	= \max \left \{ n \in \mathbb{N}
		\mid \II_\xi \subset \M_\xi^n \right \}.
\]%
\end{definition}

Let $A$ be a regular ring and, for an integer $s \geq 0$, let $\Diff^s(A)$ denote the module of absolute differential operators of order at most $s$ from $A$ to itself. Fix an element $f \in A$ and a prime ideal $\q \subset A$. It is known that, if $\ord_{\q}(f) \geq N$, then $\Delta(f) \in \q$ for every $\Delta \in \Diff^{N-1}(A)$ (cf. \cite[Ch.~III, Lemma~1.2.3]{GiraudEtudeLocale}). Moreover, in the case that $A$ is a polynomial ring over field $k$, say $A = k[x_1, \dotsc, x_r]$, the previous implication turns out to be an equivalence (see \cite[Ch.~III, Lemma~1.2.7]{GiraudEtudeLocale}). That is, when $A$ is a polynomial ring, $\ord_{\q}(f) \geq N$ if and only if $\Delta(f) \in \q$ for every $\Delta \in \Diff^{N-1}(A)$. This result has a natural generalization to the case in which $A$ admits an absolute $p$-basis, as shown in the following proposition.

\begin{proposition}  \label{prop:generalization-giraud-p}
Let $A$ be a regular ring over a field of characteristic $p > 0$ which has an absolute $p$-basis. Then, for any element $f \in A$ and any prime ideal $\q \subset A$, the following conditions are equivalent:
\begin{enumerate}[i\textup{)}]
\item $\ord_{\q}(f) \geq N$.
\item $\Delta(f) \in \q$ for every $\Delta \in \Diff^{N-1}(A)$.
\end{enumerate}%
\end{proposition}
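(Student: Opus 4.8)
The plan is to prove the two directions separately, using the family of differential operators $D_A^{[\B;\beta]}$ from Proposition~\ref{prop:p-basis:diffs} together with the decomposition result of Corollary~\ref{crl:p-basis:diff-smooth}. The implication i)~$\Rightarrow$~ii) is the ``easy'' direction and should follow from the general fact cited from Giraud: if $\ord_\q(f) \geq N$, then any absolute differential operator of order at most $N-1$ sends $f$ into $\q$. Concretely, since each $D_A^{[\B;\beta]}$ with $\abs{\beta} \leq N-1$ is a differential operator of order $\abs{\beta} \leq N-1$ over $\mathbb{F}_p$, and by Corollary~\ref{crl:p-basis:diff-smooth}~iii) every $\Delta \in \Diff^{N-1}(A)$ is an $A$-linear combination of these, it suffices to treat the operators $D_A^{[\B;\beta]}$. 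This reduces i)~$\Rightarrow$~ii) to the already-known containment.

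For the converse ii)~$\Rightarrow$~i), I would argue by contraposition: assume $\ord_\q(f) = m < N$ and produce a single operator $\Delta \in \Diff^{N-1}(A)$ with $\Delta(f) \notin \q$. The natural candidate is one of the operators $D_A^{[\B;\beta]}$ with $\abs{\beta} = m \leq N-1$. The strategy is to localize at $\q$ and work in the associated graded ring. By Lemma~\ref{lm:p-basis:rsp-open} (or Proposition~\ref{prop:rsp} after localizing), there is a $p$-basis of $A_\q$, or of a suitable localization $A_g$, of the form $\B_0 \cup \{z_1, \dotsc, z_d\}$, where $z_1, \dotsc, z_d$ is a regular system of parameters of $A_\q$. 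Crucially, the operators $D_A^{[\B;\beta]}$ are compatible with this change of $p$-basis (by the uniqueness noted after Proposition~\ref{prop:p-basis:diffs}), so I may assume the $p$-basis contains a regular system of parameters $z_1, \dotsc, z_d$ generating $\q A_\q$. Writing $f$ in the $p$-basis and passing to the initial form in $\gr_\q(A_\q)$, which is a polynomial ring $K[Z_1, \dotsc, Z_d]$ over the residue field, the hypothesis $\ord_\q(f) = m$ means the initial form $\mathrm{in}_\q(f)$ is a nonzero homogeneous polynomial of degree $m$. Since the $D_A^{[\B;\beta]}$ with $\beta$ supported on the $z_i$ act on the associated graded as the ordinary Taylor/partial differential operators on $K[Z_1, \dotsc, Z_d]$ (this is exactly the content of the binomial formula \eqref{eq:p-basis:prop-diffs-binom-condition}), some $D_A^{[\B;\beta]}$ with $\abs{\beta} = m$ applied to $\mathrm{in}_\q(f)$ gives a nonzero element of degree $0$, i.e.\ a nonzero element of $K$. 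Tracing this back, the corresponding $D_A^{[\B;\beta]}(f)$ has order $0$ at $\q$, hence lies outside $\q$, contradicting ii).

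The main obstacle I anticipate is making the passage to the associated graded ring fully rigorous, in two respects. First, I must verify that the operators $D_A^{[\B;\beta]}$ genuinely ``pass to $\gr_\q$'' and act there as classical differential operators with respect to the variables $Z_i$; this requires checking that applying $D_A^{[\B;\beta]}$ decreases the $\q$-adic order by exactly $\abs{\beta}$ on a well-chosen monomial, which is where the explicit formula $D_A^{[\B;\beta]}(\B^\alpha) = \binom{\alpha}{\beta} \B^{\alpha-\beta}$ and the fact that $z_1, \dotsc, z_d$ are part of the $p$-basis both enter. Second, I need the localization and $p$-basis-adjustment steps of Lemma~\ref{lm:p-basis:rsp-open} to preserve the relevant operators, so that an operator produced on $A_g$ extends to (or restricts from) a genuine element of $\Diff^{N-1}(A)$; the compatibility of the $D_A^{[\B;\beta]}$ with localization should handle this, but the bookkeeping between $\Diff^{N-1}(A)$ and $\Diff^{N-1}(A_\q)$ must be stated carefully. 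Once these two points are settled, the nonvanishing of a suitable partial derivative of a nonzero homogeneous polynomial over a field is elementary.
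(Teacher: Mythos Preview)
Your outline is workable, but the paper takes a different and rather slicker route for ii)~$\Rightarrow$~i). Instead of changing the $p$-basis to one containing a regular system of parameters and arguing via the associated graded ring, the paper passes to the completion $\widehat{A_\q}\simeq k(\q)[[z_1,\dotsc,z_d]]$ (Cohen's structure theorem), where the desired statement is already known for power series rings: there is some $\Delta^\ast\in\Diff^n\bigl(\widehat{A_\q}\bigr)$ with $\Delta^\ast(f)$ a unit. The key observation is then that Corollary~\ref{crl:p-basis:diff-smooth}~iii) applies to differential operators $A\to M$ for \emph{any} $A$-module $M$; taking $M=\widehat{A_\q}$ and $\Delta=\Delta^\ast\circ\iota$, one writes $\Delta^\ast(f)=\sum c_\beta\,D_A^{[\B;\beta]}(f)$ with $c_\beta\in\widehat{A_\q}$, and since this sum lies outside $\q\widehat{A_\q}$ some $D_A^{[\B;\beta]}(f)$ with $|\beta|\le n$ must lie outside $\q$. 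No change of $p$-basis, no graded-ring computation, and no Leibniz rule are needed.

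Your approach can be made to work, but two of the obstacles you flag are real and cost something. First, your appeal to ``uniqueness after Proposition~\ref{prop:p-basis:diffs}'' is a misreading: that remark asserts uniqueness of $D_A^{[\B;\beta]}$ for a \emph{fixed} $\B$, not invariance under changing $\B$. Passing from the original $\B$ to $\B_0\cup\{z_1,\dotsc,z_d\}$ on $A_g$ genuinely changes the operators; to get back to $\Diff^{N-1}(A)$ you would again invoke Corollary~\ref{crl:p-basis:diff-smooth}~iii) (this time with $M=A_g$) to express the new operators in terms of the old ones. Second, showing that $D^{[\B';\beta]}$ induces the Taylor operator on $\gr_\q(A_\q)$ requires the generalized Leibniz rule $D^{[\gamma]}(uv)=\sum_{\gamma_1+\gamma_2=\gamma}D^{[\gamma_1]}(u)\,D^{[\gamma_2]}(v)$, which the paper never states (it is buried in the EGA reference for differential smoothness). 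So your argument is correct but is both longer and leans on an unstated identity, whereas the paper's completion trick reduces everything to a single application of the decomposition in Corollary~\ref{crl:p-basis:diff-smooth}~iii).
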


\begin{proof}
i) $\Rightarrow$ ii) follows from \cite[Ch.~III, Lemma~1.2.3]{GiraudEtudeLocale}. For the converse we proceed by contradiction. Namely, assume that ii) holds and that $\ord_{\q}(f) = n$ with $n < N$.

Fix a regular system of parameters of $A_\q$, say $z_1, \dotsc, z_d$, and let $\widehat{A_{\q}}$ denote the completion of $A_{\q}$ with respect to its maximal ideal. Recall that, by Cohen's structure theorem,
$\widehat{A_{\q}} \simeq k(\q)[[z_1, \dotsc, z_d]]$, where $k(\q)$ represents the residue field of $A_{\q}$. Since $\widehat{A_{\q}}$ is faithfully flat over $A_\q$, the order of $f$ in $\widehat{A_{\q}}$ coincides with its order in $A_\q$. Hence $f$ has order $n$ in $\widehat{A_{\q}}$, and therefore there should be a differential operator in $\widehat{A_{\q}}$ of order at most $n$ over $k(\q)$, say $\Delta^* : \widehat{A_{\q}} \to \widehat{A_{\q}}$, so that $\Delta^*(f)$ is a unit in $\widehat{A_{\q}}$. That is, so that $\Delta^*(f) \notin \q \widehat{A_{\q}}$.

Next, fix an absolute $p$-basis of $A$, say $\B$. Recall that, in virtue of Proposition~\ref{prop:p-basis:diffs}, there is family of differential operators on $A$ naturally associated to $\B$, say $\left \{ D_A^{[\B;\beta]} \mid \beta \in \mathbb{N}^{\oplus \B} \right( \}$, where $D_A^{[\B;\beta]} \in \Diff^{\abs{\beta}}(A)$ for each $\beta \in \mathbb{N}^{\oplus \B}$. Let $\iota : A \to \widehat{A_{\q}}$ denote the natural homomorphism from $A$ to $\widehat{A_\q}$. By composition, $\Delta^* \circ \iota$ is an absolute differential operator of order $n$ from $A$ to $\widehat{A_{\q}}$. Thus, by Corollary~\ref{crl:p-basis:diff-smooth}~ii) and iii), there exists a finite collection of indices, say $\beta_1, \dotsc, \beta_r \in \mathbb{N}^{\oplus \B}$, with $\abs{\beta_i} \leq n$, such that
\[
	(\Delta^* \circ \iota)(f)
	= \sum_{i=1}^r \Delta(\B^{\beta_i}) D_A^{[\B;\beta_i]}(f).
\]%
Since $\Delta^*(f) \notin \q \widehat{A_{\q}}$, it follows that $D_A^{[\B;\beta_i]}(f) \notin \q$ for some $i \in \{1, \dotsc, r\}$, which contradicts ii).
\end{proof}

\subsection*{Differential saturation of ideals}

Consider a regular ring $A$ and an ideal $I \subset A$, and fix an integer $n \geq 0$. We shall define the \emph{saturation of $I$} by differential operators of order at most $n$ by
\[
	\Diff^n (A)(I)
	= \left \langle \Delta(f)
		\mid f \in I,  \Delta \in \Diff^n(A) \right \rangle.
\]%
Assume that the ring $A$ has an absolute $p$-basis. In this case, attending to Proposition~\ref{prop:generalization-giraud-p}, it is easy to see that $I$ has order at least $N$ at a prime ideal $\q \subset A$ if and only if $\Diff^{N-1}(I) \subset \q$. Thus, when $A$ has an absolute $p$-basis,
\[
	\bigl\{ \q \in \Spec(A) \mid \ord_{\q}(I) \geq N \bigr\}
	= \VV \bigl( \Diff^{N-1}(A)(I) \bigr).
\]%
This gives a characterization of the set of points of $\Spec(A)$ where the ideal $I$ has order at least $N$. The purpose of the rest of this section is to find a similar characterization for the set of points of a given variety $Z$ where a coherent ideal $\II \subset \OO_Z$ has certain order. To this end, we shall first check that the ideal $\Diff^n (A)(I)$ introduced above is compatible with localization.

\begin{lemma}  \label{lm:diff-sat-localization}
Let $A$ be regular ring of characteristic $p > 0$ which has an absolute $p$-basis, say $\B$, and let $I \subset A$ be an ideal. Fix an integer $n \geq 0$. Then, for any multiplicative subset $S \subset A$,
\begin{equation} \label{eq:lm:diff-sat-localization}
	\Diff^n (S^{-1}A) (S^{-1}I)
	= \Diff^n (A)(I) \otimes_A S^{-1}A .
\end{equation}%
\end{lemma}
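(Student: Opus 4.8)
The plan is to reduce everything to the explicit family of operators $D_A^{[\B;\beta]}$ from Proposition~\ref{prop:p-basis:diffs}, exploiting that by Corollary~\ref{crl:p-basis:diff-smooth}~iii) every operator in $\Diff^n$ is an $A$-linear (resp.\ $S^{-1}A$-linear) combination of the $D^{[\B;\beta]}$ with $\abs{\beta}\le n$. First I would dispose of the trivial case $0 \in S$ (both sides vanish). Since $A$ is regular, and hence reduced, $S$ then contains no nilpotents, so Lemma~\ref{lm:p-basis:localization} guarantees that the image of $\B$ is an absolute $p$-basis of $S^{-1}A$; thus $S^{-1}A$ carries its own family $D_{S^{-1}A}^{[\B;\beta]}$. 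The whole proof rests on one compatibility statement, which I call Step~1: \emph{$D_{S^{-1}A}^{[\B;\beta]}$ is the localization of $D_A^{[\B;\beta]}$}, i.e.\ $D_{S^{-1}A}^{[\B;\beta]}(a/1) = D_A^{[\B;\beta]}(a)/1$ for all $a \in A$. To prove it I would return to the construction in Proposition~\ref{prop:p-basis:diffs}: the operator is induced by the Taylor operator $\Tay^\beta$ on a presentation $A^{p^e}[T]/J$ with $p^e > \abs{\beta}$. By Lemma~\ref{lm:p-basis:pe-linear} this $\Tay^\beta$ is $A^{p^e}[T^{p^e}]$-linear, and the relevant denominators $S^{p^e}$ lie in $A^{p^e}$; hence $\Tay^\beta$ commutes with this localization, and the induced operator on $S^{-1}A$ satisfies the binomial condition \eqref{eq:p-basis:prop-diffs-binom-condition} for the $p$-basis $\B$. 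By the uniqueness recorded in the remark following Proposition~\ref{prop:p-basis:diffs}, it must coincide with $D_{S^{-1}A}^{[\B;\beta]}$.

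Granting Step~1, the inclusion $\supseteq$ is immediate: the ideal $\Diff^n(A)(I)\otimes_A S^{-1}A$ is generated over $S^{-1}A$ by the elements $\Delta(f)/1$ with $\Delta\in\Diff^n(A)$ and $f \in I$; decomposing $\Delta = \sum_{\abs{\beta}\le n} c_\beta D_A^{[\B;\beta]}$ and using Step~1 gives
\[
	\Delta(f)/1 = \sum_{\abs{\beta}\le n}(c_\beta/1)\,D_{S^{-1}A}^{[\B;\beta]}(f/1),
\]
which lies in $\Diff^n(S^{-1}A)(S^{-1}I)$ because each $D_{S^{-1}A}^{[\B;\beta]}$ has order $\abs{\beta} \le n$ and $f/1 \in S^{-1}I$.

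The substantive direction is $\subseteq$, and the main obstacle is that $\Diff^n(S^{-1}A)$ contains operators ``with denominators'' that are not visibly localizations of operators on $A$. I would overcome this by an induction on the order $n$, proving the claim: for every $\Delta \in \Diff^n(S^{-1}A)$, every $f \in I$, and every $s \in S$, one has $\Delta(f/s) \in \Diff^n(A)(I)\otimes_A S^{-1}A$. For $n=0$ the operator is multiplication by $\Delta(1)\in S^{-1}A$ and $f/s \in S^{-1}I = \Diff^0(A)(I)\otimes_A S^{-1}A$ (note $\Diff^0(A)(I)=I$), so the claim holds. For the inductive step, set $g = f/s$, so that $f/1 = s\,g$ in $S^{-1}A$, and use the defining commutator relation of Section~\ref{sec:differential-operators}, $[s,\Delta](g) = s\,\Delta(g) - \Delta(sg)$, which rearranges to
\[
	s\,\Delta(g) = \Delta(f/1) + [s,\Delta](g).
\]
The first term lies in the ideal by the argument of the $\supseteq$ direction (decompose $\Delta$ into the $D_{S^{-1}A}^{[\B;\beta]}$, invoke Step~1, and note $D_A^{[\B;\beta]}(f) \in \Diff^n(A)(I)$ for $\abs{\beta}\le n$); the second term lies in the ideal because $[s,\Delta]$ has order $\le n-1$ and $g = f/s$ is again of the required shape, so the inductive hypothesis applies. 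As $s$ is a unit in $S^{-1}A$, we conclude $\Delta(g) \in \Diff^n(A)(I)\otimes_A S^{-1}A$. Applying the claim to the generators $\Delta(g)$ (with $\Delta\in\Diff^n(S^{-1}A)$ and $g \in S^{-1}I$) of $\Diff^n(S^{-1}A)(S^{-1}I)$ then yields $\subseteq$ and finishes the proof.
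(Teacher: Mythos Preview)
Your proposal is correct and rests on the same essential ingredients as the paper: Lemma~\ref{lm:p-basis:localization} to transport the $p$-basis to $S^{-1}A$, the compatibility of $D_A^{[\B;\beta]}$ with localization (your Step~1, which the paper simply asserts in its final sentence), and Corollary~\ref{crl:p-basis:diff-smooth}~iii) to decompose arbitrary operators.

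The only notable difference is in how you handle the inclusion $\subseteq$. The paper avoids your induction altogether: it fixes generators $I=\langle f_1,\dotsc,f_r\rangle$, notes that the same elements generate $S^{-1}I$ over $S^{-1}A$, and then applies Corollary~\ref{crl:p-basis:diff-smooth}~iii) on \emph{both} rings to conclude directly that
\[
\Diff^n(A)(I)=\bigl\langle D_A^{[\B;\beta]}(f_i):\abs{\beta}\le n,\ i\bigr\rangle,
\qquad
\Diff^n(S^{-1}A)(S^{-1}I)=\bigl\langle D_{S^{-1}A}^{[\B;\beta]}(f_i):\abs{\beta}\le n,\ i\bigr\rangle,
\]
after which Step~1 yields the equality at once. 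This is shorter: once $f_1,\dotsc,f_r$ are known to generate $S^{-1}I$, the denominators are absorbed into the coefficients $c_\beta\in S^{-1}A$ supplied by Corollary~\ref{crl:p-basis:diff-smooth}~iii), and no commutator trick is needed. Your induction via $[s,\Delta]$ is a valid alternative and is arguably more self-contained (it does not invoke, even implicitly, a Leibniz-type reduction of $\Diff^n(R)(\langle f_1,\dotsc,f_r\rangle)$ to $\sum_i\Diff^n(R)(f_i)$), but it is longer than necessary here.
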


\begin{proof}
Fix a collection of generators of $I$, say $I = \langle f_1, \dotsc, f_r \rangle$. For each $f_i$, consider the ideal
\[
	\Diff^n(A)(f_i)
	= \left \langle \Delta(f_i)
		\mid \Delta \in \Diff^n(A) \right \rangle.
\]%
Note that, with this notation,
\[
	\Diff^n (A)(I)
	= \Diff^n (A)(f_1) + \dotsb + \Diff^n (A)(f_r).
\]%
Moreover, by Corollary~\ref{crl:p-basis:diff-smooth}~iii), one has that
\[
	\Diff^n (A)(f_i)
	= \left \langle D^{[\B;\beta]}_A (f_i)
		\mid \, \abs{\beta} \leq n \right \rangle ,
\]%
and hence
\[
	\Diff^n (A)(I)
	= \left \langle D^{[\B;\beta]}_A (f_i)
		\mid i \in \{1, \dotsc, r\}; \,
		\abs{\beta} \leq n \right \rangle.
\]%
Paralleling the previous argument, and taking into account that $\B$ can also be regarded as $p$-basis of $S^{-1}A$ (Lemma~\ref{lm:p-basis:localization}), one can see that
\[
	\Diff^n (S^{-1}A) (S^{-1}I)
	= \left \langle D^{[\B;\beta]}_{S^{-1}A} (f_i)
		\mid i \in \{1, \dotsc, r\}; \,
		\abs{\beta} \leq n \right \rangle.
\]%
In this way, as the image of $D^{[\B;\beta]}_A (f_i)$ in $S^{-1}A$ coincides with $D^{[\B;\beta]}_{S^{-1}A} (f_i)$, one readily checks that \eqref{eq:lm:diff-sat-localization} holds.
\end{proof}

Next, given a regular variety $Z$ defined over a field positive characteristic $k$ and a coherent ideal $\II \subset \OO_Z$, we would like to define some sort of saturation of $\II$ by differential operators on $\OO_Z$. Note that, in general, the sheaf of absolute differential operators of order at most $n$ on $Z$, say $\DDiff^n(\OO_Z)$, is not finitely generated or quasi-coherent (namely, this occurs whenever $[k:k^p] = \infty$). Thus, it is not clear a priori whether one can construct a coherent differential saturation of $\II$.

\begin{proposition}  \label{prop:coherent-diff-I}
Let $Z$ be a regular variety over a \textup{(}possibly non-perfect\textup{)} field $k$ of characteristic $p > 0$, and let $\II$ be a coherent sheaf of ideals over $Z$. Then, for each integer $n \geq 0$, there exists a unique coherent sheaf of ideals over $Z$, which we shall denote by $\DDiff^n(\OO_Z)(\II)$, with the following properties:
\begin{enumerate}[i\textup{)}]
\item For every open subset $W \subset Z$, every element $f \in \Gamma(\II,W)$, and every differential operator $\Delta \in \Diff^n \bigl( \Gamma(\OO_Z,W) \bigr)$,
\[
	\Delta(f)
	\in \Gamma \bigl( \DDiff^n(\OO_Z)(\II), W \bigr);
\]%
\item For every $\xi \in Z$,
\[
	\bigl( \DDiff^n(\OO_Z)(\II) \bigr)_\xi
	= \Diff^n (\OO_{Z,\xi})(\II_\xi).
\]%
\end{enumerate}
\end{proposition}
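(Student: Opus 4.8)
The plan is to construct $\DDiff^n(\OO_Z)(\II)$ by gluing together locally defined coherent ideals, using the existence of $p$-bases on affine charts to trivialize the construction locally and the localization formula of the previous lemma to make the gluing canonical. First I would invoke Theorem~\ref{thm:variety-admits-p-basis} to cover $Z$ by open affine charts $\Spec(A) \subset Z$ such that $A$ admits an absolute $p$-basis $\B$. On each such chart, writing $I = \Gamma(\II, \Spec(A))$, the ideal $\Diff^n(A)(I) \subset A$ is finitely generated: indeed $A$ is a finite-type $k$-algebra, hence noetherian, and by Corollary~\ref{crl:p-basis:diff-smooth}~iii) it is in fact generated by the finitely many nonzero elements $D^{[\B;\beta]}_A(f_i)$ with $\abs{\beta} \le n$ applied to a finite generating set $f_1, \dotsc, f_r$ of $I$. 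It therefore defines a coherent sheaf of ideals $\mathcal{F}_A := \widetilde{\Diff^n(A)(I)}$ on $\Spec(A)$.

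Second I would show that these local sheaves glue, the key point being that their stalks are intrinsic to $Z$ and independent of the chart. For a prime $P \subset A$ corresponding to $\xi \in \Spec(A)$, Lemma~\ref{lm:diff-sat-localization} applied to $S = A \setminus P$ gives
\[
	(\mathcal{F}_A)_\xi
	= \Diff^n(A)(I) \otimes_A A_P
	= \Diff^n(A_P)(I_P)
	= \Diff^n(\OO_{Z,\xi})(\II_\xi),
\]
since $A_P = \OO_{Z,\xi}$ and $I_P = \II_\xi$. Thus on the overlap of two charts $\Spec(A)$ and $\Spec(B)$ the subsheaves $\mathcal{F}_A, \mathcal{F}_B \subset \OO_Z$ have identical stalks at every point. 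As any subsheaf of $\OO_Z$ is determined by its stalks — a local section lies in the subsheaf precisely when each of its germs does — it follows that $\mathcal{F}_A$ and $\mathcal{F}_B$ agree on the overlap, so the $\mathcal{F}_A$ patch together to a global coherent sheaf of ideals $\DDiff^n(\OO_Z)(\II) \subset \OO_Z$. The stalk computation above is then exactly property~ii), and the same determination-by-stalks principle yields uniqueness, since any coherent ideal satisfying ii) has the prescribed stalks.

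Finally I would verify property~i). Fix an open $W \subset Z$, a section $f \in \Gamma(\II, W)$, and $\Delta \in \Diff^n\bigl(\Gamma(\OO_Z, W)\bigr)$. Since $\DDiff^n(\OO_Z)(\II)$ is a sheaf, it suffices to check the membership after restricting to each affine chart $\Spec(A) \subset W$ carrying a $p$-basis. Because absolute differential operators form a sheaf on $Z$, the operator $\Delta$ restricts to $\Delta|_{\Spec(A)} \in \Diff^n(A)$, and $\Delta(f)|_{\Spec(A)} = \Delta|_{\Spec(A)}\bigl(f|_{\Spec(A)}\bigr)$ with $f|_{\Spec(A)} \in I$; by the very definition of $\Diff^n(A)(I)$ this element lies in $\Gamma\bigl(\DDiff^n(\OO_Z)(\II), \Spec(A)\bigr)$. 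As such charts cover $W$, property~i) follows. I expect the main obstacle to be the gluing step: a priori the local construction depends on the chosen chart, and the sheaf $\DDiff^n(\OO_Z)$ of differential operators is neither quasi-coherent nor finitely generated when $[k:k^p] = \infty$, so one cannot glue naively from an ambient quasi-coherent structure. The device that resolves this is Lemma~\ref{lm:diff-sat-localization}, which realizes each stalk as the intrinsic invariant $\Diff^n(\OO_{Z,\xi})(\II_\xi)$ and thereby forces compatibility on overlaps.
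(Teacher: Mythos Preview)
Your proposal is correct and follows essentially the same approach as the paper: cover $Z$ by affine charts admitting an absolute $p$-basis via Theorem~\ref{thm:variety-admits-p-basis}, define the sheaf locally as the ideal $\Diff^n(A)(I)$, use Lemma~\ref{lm:diff-sat-localization} to identify all stalks with the intrinsic invariant $\Diff^n(\OO_{Z,\xi})(\II_\xi)$ and thereby glue, and then verify i) by restriction to such charts. Your write-up is in fact slightly more explicit than the paper's on the points of coherence and uniqueness, but the argument is the same.
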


\begin{proof}
We shall start by constructing the sheaf $\DDiff^n(\OO_Z)(\II)$. After that we will check that it satisfies conditions i), ii), and iii).

Let $\mathfrak{A}$ denote the family of open affine subschemes of $Z$ of the form $U = \Spec(A)$ such that $A$ admits an absolute $p$-basis. Recall that, by Theorem~\ref{thm:variety-admits-p-basis}, $Z$ is covered by these charts. For each chart $U \in \mathfrak{A}$, say $U = \Spec(A)$, consider the sheaf
\[
	\DD_U := \bigl[ \Diff^n(A) (\Gamma(\II,U)) \bigr]^{\sim}
\]%
defined over $U$. By Lemma~\ref{lm:diff-sat-localization},
\begin{equation} \label{eq:thm:coherent-diff-I:local}
	(\DD_U)_\xi
	= \Diff^n (\OO_{Z,\xi})(\II_\xi)
\end{equation}%
for every $\xi \in U$. Thus we see that the collection of sheafs $\{\DD_U\}_{U \in \mathfrak{A}}$ can be patched in order to obtain a coherent sheaf of ideals over $Z$. This will be the sheaf $\DDiff^n(\OO_Z)(\II)$.

To check property i), fix an open subset $W \subset Z$, an element $f \in \Gamma(\II,W)$, and a differential operator $\Delta \in \Diff^n \bigl( \Gamma(\OO_Z,W) \bigr)$. Choose a finite collection of charts, say $U_1, \dotsc, U_m \in \mathfrak{A}$, with $U_i = \Spec(A_i)$, so that $W = \bigcup_{i=1}^m U_i$. Note that, for each $i$, the element $f$ maps to a section $f_i \in A_i$, and $\Delta$ maps to a differential operator $\Delta_i \in \Diff^n(A_i)$, in such a way that $\Delta(f)$ maps to the section $\Delta_i(f_i) \in A_i$. By construction,
\[
	\Delta_i(f_i)
	\in \Gamma \bigl( \DDiff^n(\OO_Z)(\II), U_i \bigr)
	= \Diff^n(A_i) (\Gamma(\II,U_i)),
\]%
and hence the section $\Delta(f)$ belongs to $\Gamma \bigl( \DDiff^n(\OO_Z)(\II), W \bigr)$. This proves i). Property ii) follows from the construction and condition \eqref{eq:thm:coherent-diff-I:local}. %
\end{proof}

\begin{theorem}  \label{thm:order-ideal-sheaf}
Let $Z$ be a regular variety over a \textup{(}possibly non-perfect\textup{)} field $k$ of characteristic $p > 0$, and let $\II$ be a coherent sheaf of ideals over $Z$. Then, for every integer $N \geq 1$,
\[
	\bigl\{ \xi \in Z \mid \ord_\xi(\II) \geq N \bigr\}
	= \VV \bigl( \DDiff^{N-1}(\OO_Z)(\II) \bigr)
	\subset Z,
\]%
where $\DDiff^{N-1}(\OO_Z)(\II) \subset \OO_Z$ denotes the coherent ideal sheaf constructed in Proposition~\ref{prop:coherent-diff-I}.
\end{theorem}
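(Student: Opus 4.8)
The plan is to reduce this global statement to a pointwise criterion on stalks, so that the theorem becomes a formal consequence of Proposition~\ref{prop:generalization-giraud-p} together with the stalk computation of Proposition~\ref{prop:coherent-diff-I}. Since both sides of the asserted identity are subsets of $Z$, it suffices to decide, for an arbitrary point $\xi \in Z$, whether $\xi$ lies in each of them. First I would fix $\xi$ and note that, by Theorem~\ref{thm:variety-admits-p-basis}, there is an affine neighbourhood $\Spec(A) \subset Z$ of $\xi$ with $A$ admitting an absolute $p$-basis; writing $\OO_{Z,\xi} = A_\p$ for the prime $\p$ corresponding to $\xi$, Lemma~\ref{lm:p-basis:localization} shows that the regular local ring $\OO_{Z,\xi}$ again admits an absolute $p$-basis. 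Thus $\OO_{Z,\xi}$ meets the hypotheses of Proposition~\ref{prop:generalization-giraud-p}, which is the only input about the local ring that I need.

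The heart of the argument is then a short chain of equivalences at the level of the stalk. By definition of $\VV(-)$ for a coherent ideal sheaf, one has $\xi \in \VV\bigl(\DDiff^{N-1}(\OO_Z)(\II)\bigr)$ if and only if $\bigl(\DDiff^{N-1}(\OO_Z)(\II)\bigr)_\xi \subseteq \M_\xi$, and by Proposition~\ref{prop:coherent-diff-I}~ii) the left-hand stalk equals $\Diff^{N-1}(\OO_{Z,\xi})(\II_\xi)$. Applying the ideal form of Proposition~\ref{prop:generalization-giraud-p} to the regular local ring $\OO_{Z,\xi}$, the ideal $\II_\xi$, and the prime $\M_\xi$ — that is, the equivalence recorded in the discussion of differential saturation, $\ord_{\M_\xi}(\II_\xi) \geq N$ if and only if $\Diff^{N-1}(\OO_{Z,\xi})(\II_\xi) \subseteq \M_\xi$ — one sees that the containment $\Diff^{N-1}(\OO_{Z,\xi})(\II_\xi) \subseteq \M_\xi$ is equivalent to $\ord_{\M_\xi}(\II_\xi) \geq N$. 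Finally, because $\OO_{Z,\xi}$ is local with maximal ideal $\M_\xi$, its localization at $\M_\xi$ is itself, so $\ord_{\M_\xi}(\II_\xi) = \max\{ n \mid \II_\xi \subseteq \M_\xi^n \} = \ord_\xi(\II)$. Concatenating these equivalences gives $\xi \in \VV\bigl(\DDiff^{N-1}(\OO_Z)(\II)\bigr)$ if and only if $\ord_\xi(\II) \geq N$, which is exactly the claimed set equality.

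The genuinely substantial content has already been absorbed into the two cited results, so I expect no serious obstacle at this stage. Proposition~\ref{prop:generalization-giraud-p} carries the analytic weight (Cohen's structure theorem and the decomposition of an arbitrary differential operator in terms of the $D^{[\B;\beta]}$), while Proposition~\ref{prop:coherent-diff-I} resolves the delicate point that the sheaf of \emph{absolute} differential operators need not be quasi-coherent, yet the saturated ideal sheaf nonetheless exists, is coherent, and has the expected stalks. Consequently the remaining work is purely a matter of careful bookkeeping: verifying that the intrinsic order $\ord_\xi(\II)$ computed in $\OO_{Z,\xi}$ agrees with the order of the localized ideal $\II_\xi$ at $\M_\xi$, and confirming that $\OO_{Z,\xi}$ satisfies the hypotheses of Proposition~\ref{prop:generalization-giraud-p}. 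Both are immediate once the local absolute $p$-basis is in hand, so I anticipate the proof to be essentially this chain of equivalences with no further technical difficulty.
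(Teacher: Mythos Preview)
Your proposal is correct and follows essentially the same approach as the paper: the paper's proof is the single sentence ``This is a consequence of Proposition~\ref{prop:coherent-diff-I}~ii) and Proposition~\ref{prop:generalization-giraud-p},'' and you have simply spelled out the stalkwise chain of equivalences that this sentence encodes. The extra justification you give (that $\OO_{Z,\xi}$ inherits an absolute $p$-basis via Theorem~\ref{thm:variety-admits-p-basis} and Lemma~\ref{lm:p-basis:localization}, and that $\ord_{\M_\xi}(\II_\xi) = \ord_\xi(\II)$) is exactly the bookkeeping the paper leaves implicit.
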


\begin{proof}
This is a consequence of Proposition~\ref{prop:coherent-diff-I}~ii) and Proposition~\ref{prop:generalization-giraud-p}.
\end{proof}

\bibliographystyle{abbrv} 

\end{document}